\algrenewcommand\alglinenumber[1]{\footnotesize #1}
\newtheorem{remark}{Remark}
\newtheorem{lemma}{Lemma}
\begin{document}

\title{A new stopping criterion for Krylov solvers\\ applied in Interior Point Methods}
\author{Filippo Zanetti\footnote{School of Mathematics, University of Edinburgh, Edinburgh, UK. \href{mailto:f.zanetti@sms.ed.ac.uk}{f.zanetti@sms.ed.ac.uk}} \and Jacek Gondzio\footnote{School of Mathematics, University of Edinburgh, Edinburgh, UK. \href{mailto:j.gondzio@ed.ac.uk}{j.gondzio@ed.ac.uk}}}
\date{}
\maketitle

\begin{abstract}
\noindent 
When an iterative method is applied to solve the linear equation system 
in interior point methods (IPMs), the attention is usually placed on accelerating their convergence 
by designing appropriate preconditioners, but the linear solver is applied 
as a black box with a standard termination criterion which asks 
for a sufficient reduction of the residual in the linear system. 
Such an approach often leads to an unnecessary ''oversolving'' of linear 
equations. 
In this paper, an IPM that relies on an inner termination criterion not based on the residual of the linear system is introduced and analyzed. Moreover, 
new indicators for the early termination of the inner iterations are derived 
from a deep understanding of IPM needs.
The new technique has been adapted to the Conjugate Gradient (CG) 
and to the Minimum Residual method (MINRES) applied in the IPM context. 
The new criterion has been tested on a set of quadratic optimization 
problems including compressed sensing, image processing and instances 
with partial differential equation constraints, and it has been
compared to standard residual tests with variable tolerance.
Evidence gathered from these computational experiments shows that the new 
technique delivers significant improvements in terms of inner (linear) 
iterations and those translate into significant savings of the IPM 
solution time.
\end{abstract}

\noindent\textbf{Keywords}: Quadratic Programming, Interior Point Methods, Conjugate Gradient, MINRES, Stopping criterion.

\section{Introduction}

Interior Point Methods (IPMs) represent the state-of-the-art for the solution of convex optimization problems. Being second-order methods, they usually converge in merely a few
iterations and if the cost of a single iteration is kept small they are able to outperform the first-order methods, especially when it comes to problems of very large dimensions. In these instances, the linear system that arises at each iteration is usually solved with an iterative Krylov subspace method (see e.g. \cite{kelley} or \cite{regmi2,regmi} for a newer analysis), either Conjugate Gradient or MINRES, depending on the approach chosen. The ill-conditioning of the matrices involved has given rise to a wide collection of preconditioning strategies for various applications of IPMs (e.g. \ \cite{ip_pmm,berga_indefinite,iter:BCO-COAP,diserafino_orban,mfcs,mfipm,pearson_pde}). Recent developments have also been made regarding the impact of the numerical linear algebra inexactness (e.g.\ \cite{ddd_mutual,unreduced}) and in particular on the effect that an inexact linear solver has on the convergence properties of IPMs (e.g.\ \cite{baryamureeba,inexactipm,cafieri2,cui_morikuni,freund_jarre_mizuno,gondzio_inexact,korzak,lu_monteiro_oneal,zhou_toh}).

When an iterative linear solver is used, the common approach is to employ a stopping criterion based on the reduction of the residual, i.e.\ the internal solver is stopped as soon as the initial residual is reduced by a certain predetermined factor. Different strategies have been developed in order to choose a stopping tolerance that allows the outer IPM iterations to converge, without requiring too many inner (linear solver) iterations (e.g.\ \cite{stop_inner,mizuno_jarre,inexact_morini}). However, these techniques always rely on a tolerance imposed on the residual of the linear system, although with a varying relative reduction requested. This approach does not necessarily represent the best choice, since the overall goal is not obtaining an accurate solution to the sequence of linear systems, but finding a suitable (though inexact) search direction for the optimization problem; in particular, it may be possible to obtain an inexact Newton direction that would be considered too rough from a purely linear algebra perspective (i.e.\ its residual still would be too high and any standard stopping criterion would reject it) but that could be good enough to perform the next iteration of IPM successfully (i.e.\ the direction guarantees sufficient reductions of infeasibilities and the duality gap). 
Deriving a stop criterion that accepts a direction not based on its {\it residual}, but based on a {\it potential improvement} it can bring to the outer IPM iterations could reduce the number of inner iterations required at each outer step, with little or no disadvantage to the overall convergence properties of the IPM. 

Specialized early stopping strategies have been used in other fields: in~\cite{notay_cg,stathopoulos} a CG stop criterion is applied to the Jacobi-Davidson eigensolver, when finding eigenvalues of large matrices; in \cite{freitag_spence} a termination criterion is applied to inverse iterations for solving generalized eigenvalue problems; early stopping is also used in inverse problems and machine learning as a regularizer, to avoid the phenomenon known as semiconvergence (see e.g.\ \cite{semiconvergence}); in~\cite{stop_fem,stop_axelsson,silvester_simoncini} other stopping criteria are derived for various applications. However, to the best of the authors' knowledge, an early stopping criterion, specifically designed to be applied in IPMs, that does not rely entirely on the residual of the linear system, has not yet been derived. This paper fills the gap. 

In order to obtain such a criterion, the convergence indicators of IPM (i.e. primal and dual infeasibility and complementarity) need to be estimated while performing the inner iterations with CG or MINRES. The main problem is that, to compute the complete primal-dual Newton direction and to compute the infeasibilities, additional matrix-vector products would be required at each inner iteration. Since in general only one matrix-vector product and one preconditioner application per iteration are performed, adding extra matrix applications would considerably slow down the linear solver. Fortunately, with some judicious implementation and exploiting the matrix operations that are already executed, the IPM convergence indicators can be estimated using only vector operations, resulting in a minimal increase in the cost of a single linear iteration.


From the theoretical point of view, the authors introduce an ideal stopping criterion that does not rely on the reduction of the residual of the linear system This is novel with respect to the standard literature on inexact IPMs, that mostly focuses on choosing the appropriate sequence of tolerances for each IPM iteration. A sketch of complexity analysis is given and the authors provide a rationale as to why the algorithm is expected to perform similarly to the exact version. The main assumptions used are boundedness of the iterates and the ability of the chosen Krylov solver to produce a direction that satisfies the stopping criterion; a rigorous proof of this fact is difficult, due to the complicated interaction between IPM and linear solvers, and is left as an item for further research.

The paper also introduces new indicators to estimate the optimal stopping point for the inner linear iterations and analyzes their behaviour in comparison to the residual of the linear system for the problems considered. The empirical evidence suggests a new technique to terminate early the inner iterations, which is mainly based on these new indicators rather than on a sequence of residual tolerances. Although the theoretical and practical stopping criteria are different and the complexity analysis does not directly apply to the method used in the empirical section, the proposed practical termination indicators are strongly influenced by the theoretical results.

The resulting algorithms for the solution of the linear systems are called {\it Interior Point Conjugate Gradient} (IPCG) or {\it Interior Point MINRES} (IPMINRES), depending on the approach chosen: they are specialized for the specific task which needs to be solved and show significant improvements with respect to the standard CG or MINRES on the problems that were considered, which include quadratic programs derived from image processing, compressed sensing and Partial Differential Equation (PDE) constrained optimization. In particular, the new strategy is able to avoid unnecessary inner iterations in the early stage of the IPM, while retaining the good behaviour of the method in its late iterations.

The rest of the paper is organized as follows: in Section~\ref{section_ipm} the Interior Point Method is described; in Section~\ref{section_ipcg} the new IPCG and IPMINRES iterations, that allow to estimate the convergence of IPM, are introduced; Section~\ref{section_stop} introduces a theoretical stopping criterion, for which the complexity analysis is performed, and the new indicators used in practice; in Section~\ref{section_results} the test problems and numerical results are presented.

\paragraph{Notation}
In the following, $e$ indicates the vector $(1,1,\dots,1)^T$ and $I$ represents the identity matrix; their size will be clear from the context. Given a vector $v$, the diagonal matrix $V$ is defined as $V=\text{diag}(v)$ and $v^k_j$ represents the $j$-th component of vector $v$ at the $k$-th iteration. The notation $v>0$ indicates that each component $v_j$ is strictly positive. $\|\cdot\|$ represents the Euclidean norm.

\section{Interior Point Method}
\label{section_ipm}
Consider a pair of primal-dual convex quadratic programming problems in standard form:
\begin{equation} \label{primal}
\underset{x}{\text{min}} \  c^T x+\frac{1}{2}x^TQx , \qquad \text{s.t.}  \  Ax = b, \qquad  \ x \geq 0, 
\end{equation}
\begin{equation} \label{dual}
\underset{y, \ s}{\text{max}}  \ b^Ty-\frac{1}{2}x^TQx , \qquad \text{s.t.}\   A^T y + s -Qx= c,\qquad s \geq 0,
\end{equation}
where $x,s,c\in\mathbb R^n$, $y,b\in\mathbb R^m$, $A\in\mathbb R^{m\times n}$, $Q\in\mathbb R^{n\times n}$ positive semidefinite.

An Interior Point Method (IPM) looks for an approximation of the solution to \eqref{primal}-\eqref{dual} in the interior of the feasible region; the non-negativity constraint is enforced using a logarithmic barrier term, so that the Lagrangian takes the form
\[\mathcal L(x,y,\mu)=c^Tx+\frac{1}{2}x^TQx-y^T(Ax-b)-\mu\sum_{i=1}^n\log(x_j).\]
The optimality conditions for this perturbed problem are 
\[\begin{cases}
Ax=b\\
A^Ty+s-Qx=c\\
XSe=\mu e\\
(x,s)\geq0.
\end{cases}
\]
The Newton method applied to the previous mildly nonlinear system of equations produces the following linear system, to be solved at each IPM iteration
\begin{equation}\label{unreduced_system}
\begin{bmatrix} A\ & 0\ & 0\\-Q\ & A^T\ & I\\ S\ & 0\ & X\end{bmatrix}\begin{bmatrix} \Delta x\\\Delta y\\\Delta s\end{bmatrix}=\begin{bmatrix}r_P\\r_D\\r_\mu\end{bmatrix}=\begin{bmatrix}b-Ax\\c+Qx-A^Ty-s\\\sigma\mu e-XSe\end{bmatrix},
\end{equation}
where $\sigma$ is the parameter responsible for the reduction in the complementarity measure $\mu=(x^Ts)/n$.

System \eqref{unreduced_system} is usually reduced to the augmented system
\begin{equation}\label{augsyst}
\begin{bmatrix} -Q-\Theta^{-1}\ & A^T\\ A\ & 0\end{bmatrix}\begin{bmatrix} \Delta x\\\Delta y\end{bmatrix}=\begin{bmatrix} r_D-X^{-1}r_\mu\\ r_P\end{bmatrix}
\end{equation}
where $\Theta=XS^{-1}$, and solved using an indefinite factorization or an iterative method for symmetric indefinite systems (e.g.\ MINRES \cite{minres}), or it is further reduced to the normal equations
\begin{equation}\label{normeq}
A(Q+\Theta^{-1})^{-1} A^T\Delta y=r_P+A(Q+\Theta^{-1})^{-1}(r_D-X^{-1}r_\mu)
\end{equation}
and solved with a Cholesky factorization or using the Conjugate Gradient (CG) method. The direction is then used to compute the stepsize $\alpha$ and to find the next point $(x+\alpha\Delta x,y+\alpha\Delta y,s+\alpha\Delta s)$. The outer iterations are stopped as soon as the approximation satisfies the following IPM stopping criterion
\begin{equation}\label{IPM_termination}
\frac{\|b-Ax\|}{\|b\|}\le\tau_P,\qquad\frac{\|c+Qx-A^Ty-s\|}{\|c\|}\le\tau_D,\qquad\mu\le\tau_\mu, 
\end{equation}
where $\tau_P$, $\tau_D$ and $\tau_\mu$ are predetermined tolerances.
In recent years a lot of effort has been put into designing efficient preconditioners for the augmented system and for the normal equations (e.g.\ \cite{ip_pmm,berga_indefinite,iter:BCO-COAP,mfipm,iter:OS-pcg1,iter:VOC}). The major difficulty originates from the extreme ill conditioning of the matrices in \eqref{augsyst} and \eqref{normeq} when $\mu$ gets close to zero; regularization is a common strategy employed to improve the conditioning of the problem (see \cite{quadreg,regulipm,nondiagreg}). 

The most successful implementations of the primal-dual Interior Point Methods are the path-following methods, where the approximations computed throughout the IPM iterations are forced to follow the central path and stay in the appropriately chosen neighbourhood of it. These algorithms show polynomial complexity, both in the feasible and infeasible case (see e.g.\ \cite{wright}). A common neighbourhood of the central path used in the infeasible case is defined as follows: at iteration $k$, the point $(x^k,y^k,s^k)$ is in the neighbourhood $\mathcal N_\infty(\gamma,\beta)$ if it satisfies
\begin{subequations}
\label{criterion}
\begin{equation}
\label{criterion_1}
(x^k,s^k)>0,
\end{equation}
\begin{equation}
\label{criterion_2}
\gamma\mu^k\le x^k_js^k_j\le\mu^k/\gamma,\quad\forall j,
\end{equation}
\begin{equation}
\label{criterion_3}
\|r_P^k\|\le\|r_P^0\|\beta\mu^k/\mu^0,\quad\|r_D^k\|\le\|r_D^0\|\beta\mu^k/\mu^0,
\end{equation}
\end{subequations}
where $0<\gamma<1$ and $\beta\ge1$ are two constants chosen at the beginning of the IPM algorithm. This paper focuses both on the augmented system approach \eqref{augsyst}, when dealing with generic QPs, and on the normal equations approach \eqref{normeq}, when dealing with LPs or special cases of QPs.

\section{Estimating the convergence of the outer iterations}
\label{section_ipcg}
This section provides a description of how to estimate the IPM convergence indicators throughout the CG or MINRES iterations; these quantities are used to terminate the linear iterations prematurely, without relying on the residual of the linear system. The main indicators that are commonly used, as shown in~\eqref{IPM_termination}, are the primal and dual infeasibilities and the complementarity gap. Algorithms and stopping criteria are derived both for CG and MINRES, to be used for LPs and QPs, respectively.

In the case of the normal equations for an LP, the CG is applied to system~\eqref{normeq} with $Q=0$; this means that at every inner iteration the approximation for $\Delta y$ is considered. In order to estimate the IPM indicators, $\Delta x$ and $\Delta s$ are also needed, which are computed as follows
\[\Delta x=(S^{-1}r_\mu-\Theta r_D)+\Theta A^T\Delta y,\]
\begin{equation}\label{builddeltas}
\Delta s=X^{-1}r_\mu-\Theta^{-1}\Delta x.\end{equation}
When using the augmented system instead, $\Delta x$ and $\Delta y$ are readily available and just $\Delta s$ needs to be computed.

These two formulas contain a first term, which is constant during the inner iterations, and a second term which varies as the Krylov method progresses. Once the full direction is known, the step to the boundary can be computed as
\begin{equation}\label{compute_alpha}\alpha_x^\text{max}=\min_{j\colon\Delta x_j<0} -\frac{x_j}{\Delta x_j},\quad\alpha_s^\text{max}=\min_{j\colon\Delta s_j<0} -\frac{s_j}{\Delta s_j}.\end{equation}
To determine primal and dual stepsizes, at each iteration a practical
IPM algorithm uses a fraction of the maximum step to the boundary.
It is computed as in \eqref{compute_alpha}, scaled by a certain factor (e.g. 0.995)
to guarantee that each point is in the interior of the feasible region. In this way the stepsizes are
\begin{equation}\label{compute_stepsize}
\alpha_x=0.995\,\alpha_x^\text{max},\quad\alpha_s=0.995\,\alpha_s^\text{max}.\end{equation}

\subsection{IPCG for LP}
Consider now the normal equations approach for an LP (i.e.\ $Q=0$).  Suppose the algorithm stops the CG at a certain iteration for which  the full direction $(\Delta x,\Delta y,\Delta s)$ and the stepsizes $\alpha_x$ and $\alpha_s$ have been computed. Let us indicate the new point by $(\bar x,\bar y,\bar s)$, then the infeasibilities can be written as
\[A\bar x-b=(Ax-b)+\alpha_x\Big((A\Theta A^T\Delta y)+(AS^{-1}r_\mu-A\Theta r_D)\Big),\]
\[A^T\bar y+\bar s-c=(A^Ty+s-c)+\alpha_s(A^T\Delta y+\Delta s).\]
The problematic terms in these formulas are given by $A^T\Delta y$ and $A\Theta A^T\Delta y$: computing these quantities at each linear iteration would require extra matrix operations to be performed. Define the vectors $v_1=X^{-1}r_\mu$, $v_2=S^{-1}r_\mu-\Theta r_D$, $v_3=AS^{-1}r_\mu-A\Theta r_D$, $\xi_1=A^T\Delta y$, $\xi_2=A\Theta A^T\Delta y$; then, the previous expressions become
\begin{equation}\label{ipcg_directions}\Delta x=v_2+\Theta\xi_1,\qquad\Delta s=v_1-\Theta^{-1}\Delta x,\end{equation}
\begin{equation}\label{ipcg_primal}A\bar x-b=(Ax-b)+\alpha_x(\xi_2+v_3),\end{equation}
\begin{equation}\label{ipcg_dual}A^T\bar y+\bar s-c=(A^Ty+s-c)+\alpha_s(\xi_1+\Delta s).\end{equation}
Vectors $v_1$, $v_2$ and $v_3$ remain constant during the CG iterations and can be computed once at the beginning of the algorithm. Recall that, during the CG process, the approximation $\Delta y$ is updated as
\[\Delta y\leftarrow\Delta y+\alpha^{CG} u\]
where $\alpha^{CG}$ is the CG stepsize and $u$ is the CG direction. Therefore, the quantities $\xi_1$ and $\xi_2$ can be updated in a similar way:
\[\xi_1\leftarrow\xi_1+\alpha^{CG} A^Tu,\qquad \xi_2\leftarrow\xi_2+\alpha^{CG} A\Theta A^Tu.\]
The quantity $A\Theta A^Tu$ is already computed during the CG algorithm, because it is needed to find the stepsize $\alpha^{CG}$ and to update the residual. While computing it, the quantity $A^Tu$ can be obtained as a byproduct:
\[w_1=A^Tu,\qquad w_2=A\Theta w_1.\]
In this way, it is possible to update the quantities $\xi_1$ and $\xi_2$ at each inner iteration inexpensively, which in turn allows to compute the IPM convergence indicators at each CG iteration using only vector operations. Notice that the products with matrix $\Theta$ needed to compute the directions in~\eqref{ipcg_directions}, in practice are performed as vector operations, since $\Theta$ is diagonal. Notice also that $w_1$ and $w_2$ need to be computed at the beginning of the CG process, to initialize the residual; thus, initializing $\xi_1$ and $\xi_2$  does not add operations. However, one single matrix-vector product with matrix $A$ is added at the beginning of the algorithm, to compute the constant vector $v_3$. Algorithm~\ref{ipcg_alg} summarizes the process just described: the main differences with the standard CG algorithm are in lines \ref{line1}, \ref{line2}, \ref{line3}, \ref{line4}, \ref{line5}, \ref{line6}. The IPM convergence indicators are estimated only after a number \texttt{itstart} of iterations. The algorithm does not contain any stopping criterion for now, it simply computes the primal and dual infeasibilities and the duality gap at each CG iteration, if the CG process was stopped at that iteration. Other indicators can also be computed if needed, as will be clear in the next sections. The choice of the stopping criterion based on these indicators will be discussed in the following sections.

At each iteration, the standard CG algorithm performs one matrix-vector product, one preconditioner application, two scalar products and three {\tt axpy} operations; what the authors propose to add in the IPCG algorithm requires, at each iteration, the equivalent of three scalar products (to compute $\mu$, $\Theta\xi_1$, $\Theta^{-1}\Delta x$), approximately ten {\tt axpy} operations and the computation of the stepsizes (which are computed as in \eqref{compute_alpha}-\eqref{compute_stepsize} and thus involve only comparison of vector components and component-wise divisions). Therefore, the authors expect the computational cost of the IPCG iteration to be only slightly larger than that of the standard CG step, especially if the applications of the matrix or the preconditioner are particularly expensive. 

\renewcommand{\thealgorithm}{IPCG}
\begin{algorithm}[h!]
\footnotesize
\caption{Interior Point Conjugate Gradient method}
    \label{ipcg_alg}
    \textbf{Input:} rhs $f$, tolerance $\tau_\text{inner}$, max iterations {\tt itmax}, matrices $A,\Theta$, preconditioner $P$, initial approximation $\Delta y$, minimum iterations {\tt itstart}

   \textbf{Input from IPM: } current point $(x,y,s)$, vectors $r_P,r_D,r_\mu$

\vspace{5pt}
\begin{algorithmic}[1]
\State \textbf{Initialize:}
\State $v_1=X^{-1}r_\mu$, $v_2=\Theta(v_1-r_D)$, $v_3=Av_2$\label{line1}
\State $\xi_1=A^T\Delta y$
\State $\xi_2=A\Theta\xi_1$
\State $r_0=f-\xi_2$
\State $r=r_0$
\State $z=P^{-1}r$
\State $u=z$
\State $\rho=r^Tz$
\State $\mathtt{iter}=0$

\While{$\|r\|>\tau_\text{inner}\|r_0\|$ and $\mathtt{iter}<\mathtt{itmax}$}
\State $\mathtt{iter}=\mathtt{iter}+1$
\State $w_1=A^Tu$
\State$w_2=A\Theta w_1$
\State $\alpha^{CG}=\rho/w_2^Tu$
\State $\Delta y=\Delta y+\alpha^{CG} u$
\State $\xi_1=\xi_1+\alpha^{CG} w_1$\label{line2}
\State $\xi_2=\xi_2+\alpha^{CG} w_2$\label{line3}
\State $r=r-\alpha^{CG} w_2$
\State $z=P^{-1}r$
\State $\rho^N=r^Tz$
\State $\beta=\rho^N/\rho$
\State $u=z+\beta u$
\State $\rho=\rho^N$
\If {($\mathtt{iter}\ge\mathtt{itstart}$)}
\State Compute Newton directions: $\Delta x=v_2+\Theta\xi_1,\quad\Delta s=v_1-\Theta^{-1}\Delta x$\label{line4}
\State Compute stepsizes $\alpha_x$, $\alpha_s$ using $x,\,\Delta x,\,s,\,\Delta s$\label{line5}
\State Compute convergence indicators:\label{line6}
\[p_\text{inf}=-r_P+\alpha_x(\xi_2+v_3),\quad d_\text{inf}=-r_D+\alpha_s(\xi_1+\Delta s),\quad\mu=(x+\alpha_x\Delta x)^T(s+\alpha_s\Delta s)\]
\EndIf
\EndWhile
\end{algorithmic}
\end{algorithm}

\begin{remark}
Notice that, when using a predictor-corrector strategy, the algorithm just proposed works only when computing the predictor direction. For the corrector, equations~\eqref{ipcg_primal}-\eqref{ipcg_dual} need to be modified. In particular, calling $(\Delta x^P,\Delta y^P,\Delta s^P)$ the predictor computed previously, the terms to add are $A\Delta x^P$ to the expression for the primal residual~\eqref{ipcg_primal} and $A^T\Delta y^P+\Delta s^P$ to the expression for the dual residual~\eqref{ipcg_dual}; they can be computed at the beginning since they are constant, but they add matrix operations to be performed at every call of the algorithm. Alternatively, these can be avoided by saving the final values of the vectors $\xi_1$ and $\xi_2$ from the previous IPCG call that computed the predictor direction.
\end{remark}

\subsection{IPMINRES for QP}
\renewcommand{\thealgorithm}{IPMINRES}
\begin{algorithm}[h!]
\footnotesize
\caption{Interior Point Minimum Residual method}
    \label{ipminres_alg}
\textbf{Input:} rhs $f$, tolerance $\tau_\text{inner}$, max iterations {\tt itmax}, matrices $A,\Theta$,Q, preconditioner $P$, minimum iterations {\tt itstart}

\textbf{Input from IPM: } current point $(x,y,s)$, vectors $r_P,r_D,r_\mu$

\vspace{5pt}
\begin{spacing}{1.1}
\begin{algorithmic}[1]
\State \textbf{Initialize:}
\State $\psi=P^{-1}f$, $r_1=f$, $r_2=r_1$, $\beta=\sqrt{f^T\psi}$, $w=0$, $w_2=0$, $c_s=-1$, $s_n=0$, $\bar\varphi=\beta$, $\epsilon=0$, $\Delta=0$, $\texttt{iter}=0$ 
\State $w^v=0$, $w^v_2=0$, $\xi=0$, $\zeta=X^{-1}r_\mu$\label{linem1}
\While{$\texttt{residual}>\tau_\text{inner}\|\texttt{residual}_0\|$ and $\texttt{iter}<\texttt{itmax}$}
\State $\texttt{iter}=\texttt{iter}+1$
\State $v=\begin{bmatrix}v_1\\v_2\end{bmatrix}=\dfrac{1}{\beta}\psi$
\State $\psi=\begin{bmatrix}-Q-\Theta^{-1}\ & A^T\\A\ & 0\end{bmatrix}\begin{bmatrix}v_1\\v_2\end{bmatrix}$, with byproduct $z^v=\begin{bmatrix}Qv_1\\Av_1\\A^Tv_2\end{bmatrix}$\label{linem2}
\State \textbf{if} $\texttt{iter}\ge2$ \textbf{then} $\psi=\psi-(\beta/\beta_0) r_1$ \textbf{end if}
\State $\alpha=v^T\psi$
\State $\psi=\psi-(\alpha/\beta)r_2$
\State $r_1=r_2$, $r_2=\psi$
\State $\psi=P^{-1}r_2$
\State $\beta_0=\beta$, $\beta=\sqrt{r_2^T\psi}$
\State $\epsilon_0=\epsilon$, $\delta=c_s\bar\delta+s_n\alpha$, $\bar g=s_n\bar\delta-c_s\alpha$, $\epsilon=s_n\beta$, $\bar\delta=-c_s\beta$, $r=\sqrt{\bar g^2+\bar\delta^2}$
\State $\gamma=\max(\sqrt{\bar g^2+\beta^2},\epsilon)$
\State $c_s=\bar g/\gamma$, $s_n=\beta/\gamma$, $\varphi=c_s\bar\varphi$, $\bar\varphi=s_n\bar\varphi$
\State $w_1=w_2$, $w_2=w$, $w^v_1=w^v_2$, $w^v_2=w^v$\label{linem3}
\State $w=(v-\epsilon_0w_1-\delta w_2)/\gamma$
\State $w^v=(z^v-\epsilon_0 w^v_1-\delta w^v_2)/\gamma$\label{linem4}
\State $\Delta=\begin{bmatrix}\Delta x\\\Delta y\end{bmatrix}=\Delta+\varphi w$
\State $\xi=\begin{bmatrix}\xi_Q\\\xi_x\\\xi_y\end{bmatrix}=\xi+\varphi w^v$\label{linem5}
\If {($\mathtt{iter}\ge\mathtt{itstart}$)}
\State Compute Newton direction: $\Delta s=\zeta-\Theta^{-1}\Delta x$\label{linem6}
\State Compute stepsizes $\alpha_x$, $\alpha_s$ using $x,\,\Delta x,\,s,\,\Delta s$\label{linem7}
\State Compute convergence indicators:\label{linem8}
\[p_\text{inf}=-r_P+\alpha_x\xi_x,\quad d_\text{inf}=-r_D+\alpha_s(\xi_y+\Delta s)-\alpha_x\xi_Q,\quad\mu=(x+\alpha_x\Delta x)^T(s+\alpha_s\Delta s)\]
\EndIf
\EndWhile
\end{algorithmic}
\end{spacing}
\end{algorithm}
Similarly, in the case of the augmented system for a QP, the infeasibilities can be written as
\[A\bar x-b=(Ax-b)+\alpha_x(A\Delta x)\]
\[A^T\bar y+\bar s-Q\bar x-c=(A^Ty+s-Qx-c)+\alpha_s(A^T\Delta y+\Delta s)-\alpha_x(Q\Delta x).\]
Thus, at each inner iteration, the quantities to update are $\xi_x=A\Delta x$, $\xi_y=A^T\Delta y$, $\xi_Q=Q\Delta x$; this can be done at little extra cost by exploiting the matrix-vector products already present in the MINRES algorithm, similarly to what was  done earlier for the CG. The implementation is slightly more complicated, since the MINRES updates the approximation using the two previous iterations; Algorithm~\ref{ipminres_alg} shows the standard MINRES method, according to the implementation in \cite{minres_web}, with the additional operations required: the main differences with the standard MINRES algorithm are in lines \ref{linem1}, \ref{linem2}, \ref{linem3}, \ref{linem4}, \ref{linem5}, \ref{linem6}, \ref{linem7}, \ref{linem8}. The estimation of the residual is more complicated than in the CG case and it is not shown to avoid further overcomplicating of the displayed algorithm and because it is not affected by the new approach. As before, the additional cost is given only by vector operations (scalar products, {\tt axpy} operations and stepsizes computation).


\begin{remark}
The letter $\alpha$ has been used to indicate multiple stepsizes related to IPM and CG. Since these are the standard notations in both fields, the authors did not change them but added subscripts and superscripts to identify them ($\alpha_x$ and $\alpha_s$ for the IPM stepsizes, while $\alpha^{CG}$ for the CG stepsize).
\end{remark}

\section{Stopping criterion}
\label{section_stop}

The next section provides some arguments for the complexity analysis of the proposed method. Due to the complex interaction between the IPM and the linear solvers, a full polynomial complexity proof is not presented; instead, a rationale is provided which suggests that the proposed inexact algorithm should have only slightly weaker complexity than the exact method.


\subsection{Complexity analysis}
This section will follow \cite[chapter 6]{wright}, with the difference that here the problem is a quadratic program. The authors make some standard assumptions: the relative interior of problems \eqref{primal}-\eqref{dual} is non-empty; the neighbourhood is defined by~\eqref{criterion}; the parameter $\sigma_k$ is chosen in the interval $[\sigma_\text{min},\sigma_\text{max}]$, $\sigma_\text{max}\le1$; a single stepsize $\alpha_k$ is considered instead of two different ones for the primal and dual direction; the stepsize is chosen such that the next point is inside the central path neighbourhood and it satisfies the Armijo condition
\begin{equation}\label{armijo}
\mu_{k+1}\le(1-0.01\alpha_k)\mu_k.\end{equation}
It is already known that, when dealing with an LP and using an exact method to find the direction, there is a minimum stepsize that can be taken, $\alpha_k\ge\bar\alpha$, and both the primal and dual infeasibilities are reduced by a factor $(1-\alpha_k)$. Moreover, the third equation in~\eqref{unreduced_system} yields
\begin{equation}\label{thirdequation_order1}\frac{\Delta x^k_j}{x^k_j}+\frac{\Delta s^k_j}{s^k_j}=-1+\frac{\sigma_k\mu_k}{x^k_js^k_j},\quad\forall j.\end{equation}
Notice that the right hand side in the last equation is $\mathcal O(1)$, due to the symmetric neighbourhood used \eqref{criterion_2}. Therefore, given these facts, the stopping criterion is chosen as follows: the direction produced by the inner solver is accepted as soon as
\begin{equation}\label{ipcg_criterion_1}
\max_j \Big|\frac{\Delta x^k_j}{x^k_j}\Big|\le M,\qquad\max_j \Big|\frac{\Delta s^k_j}{s^k_j}\Big|\le M\end{equation}
for some fixed constant $M$. Moreover, it should also be required that
\begin{equation}\label{ipcg_criterion_2}
\|r_P^{k+1}\|\le\eta_{k+1}\|r_P^k\|,\quad\|r_D^{k+1}\|\le\eta_{k+1}\|r_D^k\|,\end{equation}
where $\eta_k\ge1-\alpha_{k-1}$, since an inexact direction cannot reasonably perform as well as the exact one. Thus, suppose that $\eta_k=1-\omega_k\alpha_{k-1}$, for some $\omega_k\le1$; the choice of $\omega_k$ will be clarified in the next Lemma. Suppose also that the equation 
\begin{equation}\label{third_equation}S^k\Delta x^k+X^k\Delta s^k=\sigma_k\mu_k e-X^kS^ke\end{equation}
 continues to hold even if the direction is inexact;  this is the case if $\Delta s$ is built from $\Delta x$, as it usually happens when employing the normal equations or the augmented system, as shown in~\eqref{builddeltas}. Algorithm~\ref{ipm_alg} summarizes the choices made here and shows also some other features that will be clear during the proof of Lemma~\ref{lemma_alpha_bound}. 
 
\begin{remark}
In the algorithm, it may seem like the stepsize $\alpha_k$ is used at step \ref{ipmi_ipcg} but is computed only at step \ref{ipmi_alpha}. This happens because the stepsize is estimated at every inner iteration, as shown in Algorithms \ref{ipcg_alg} and \ref{ipminres_alg}. To avoid confusion, the estimated stepsize in step \ref{ipmi_ipcg} is denoted as $\hat\alpha_k$.
\end{remark}

\renewcommand{\thealgorithm}{IPM-I}
\begin{algorithm}[h]
\footnotesize
\caption{Interior Point Method with early stopping of the linear solver}
    \label{ipm_alg}
    \textbf{Input:} $\gamma\in[0,1]$, $\beta\ge1$, $0<\delta<\sigma_\text{min}<\sigma_\text{max}\le1$, $M>0$

\begin{algorithmic}[1]
\State Choose $(x^0,y^0,s^0)$ with $(x^0,s^0)>0$
\While{\eqref{IPM_termination} is not satisfied}
\State Choose $\sigma_k\in[\sigma_\text{min},\sigma_\text{max}]$
\State Choose $\omega_k\in[1-\sigma_k+\delta,1]$
\State Compute $\mu_k=(x^k)^Ts^k/n$
\State Find a direction $(\Delta x^k,\Delta y^k,\Delta s^k)$ such that \label{ipmi_ipcg}
	\[\max_j \Big|\frac{\Delta x^k_j}{x^k_j}\Big|\le M,\qquad\max_j \Big|\frac{\Delta s^k_j}{s^k_j}\Big|\le M,\]
	\[\|r_P^k\|\le(1-\omega_k\hat\alpha_k)\|r_P^{k-1}\|,\quad\|r_D^k\|\le(1-\omega_k\hat\alpha_k)\|r_D^{k-1}\|,\]
	\[S^k\Delta x^k+X^k\Delta s^k=\sigma_k\mu_k e-X^kS^ke.\]
\State Choose $\alpha_k$ as the largest $\alpha\in[0,1]$ such that \label{ipmi_alpha}
	\[(x^k+\alpha\Delta x^k,y^k+\alpha\Delta y^k,s^k+\alpha\Delta s^k)\in\mathcal N_\infty(\gamma,\beta),\]
	\[(x^k+\alpha\Delta x^k)^T(s^k+\alpha\Delta s^k)\le(1-0.01\alpha)(x^k)^T(s^k).\]
\State Set 
\[(x^{k+1},y^{k+1},s^{k+1})=(x^k+\alpha_k\Delta x^k,y^k+\alpha_k\Delta y^k,s^k+\alpha_k\Delta s^k)\]
\EndWhile
\end{algorithmic}
\end{algorithm}

The Lemma below asserts that, if the direction is chosen using the stopping criterion defined by~\eqref{ipcg_criterion_1}-\eqref{ipcg_criterion_2}, then there exists a minimum stepsize $\tilde\alpha$ such that a new iterate after a step in the (inexact) Newton direction belongs to the $\mathcal N_\infty(\gamma,\beta)$ neighbourhood and delivers a guaranteed reduction of the complementarity product. In the following, the iteration index $k$ will be omitted, for sake of clarity.

\begin{lemma}
\label{lemma_alpha_bound} 
Consider an IPM algorithm where each direction satisfies conditions \eqref{ipcg_criterion_1}-\eqref{ipcg_criterion_2}-\eqref{third_equation}. Suppose that $\omega$ is chosen such that $\omega\ge1-\sigma+\delta$, where $\delta<\sigma_\text{min}$ is a constant.

Then, there exists a value $\tilde\alpha\in(0,1)$ such that the following conditions are satisfied for all $\alpha\in[0,\tilde\alpha]$ at each IPM iteration and for all components $j$:
\begin{subequations}
\begin{equation}\label{proof_ineq_1}(x_j+\alpha\Delta x_j)(s_j+\alpha\Delta s_j)\ge\gamma(x+\alpha\Delta x)^T(s+\alpha\Delta s)/n,\end{equation}
\begin{equation}\label{proof_ineq_2}(x_j+\alpha\Delta x_j)(s_j+\alpha\Delta s_j)\le(1/\gamma)(x+\alpha\Delta x)^T(s+\alpha\Delta s)/n,\end{equation}
\begin{equation}\label{proof_ineq_3}(x+\alpha\Delta x)^T(s+\alpha\Delta s)/n\le(1-0.01\alpha)\mu,\end{equation}
\begin{equation}\label{proof_ineq_4}(x+\alpha\Delta x)^T(s+\alpha\Delta s)\ge\eta x^Ts.\end{equation}
\end{subequations}
In particular, the minimum stepsize is
\begin{equation}\label{alpha_bound}
\tilde\alpha=\min\Big(\frac{\sigma_\text{min}\gamma(1-\gamma)}{M^2(1+\gamma^2)},\frac{\sigma_\text{min}(1-\gamma)}{2M^2},\frac{0.99-\sigma_\text{max}}{M^2},\frac{\delta}{M^2},1\Big).\end{equation}

\end{lemma}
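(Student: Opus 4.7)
The plan is to reduce each of the four inequalities to a constraint on $\alpha$ by expanding the shifted products using the third-equation identity \eqref{third_equation}, then using the size conditions \eqref{ipcg_criterion_1} together with the neighbourhood bound \eqref{criterion_2} to control the second-order terms $\alpha^2 \Delta x_j \Delta s_j$ and $\alpha^2 \Delta x^T \Delta s$. The proof would follow the template of Wright's Chapter 6, modified only to handle the quadratic term (which does not actually enter explicitly at the level of the central-path products) and the inexactness through \eqref{ipcg_criterion_2}.

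\emph{Step 1: basic expansion.} First I would expand component-wise
\[
(x_j+\alpha\Delta x_j)(s_j+\alpha\Delta s_j)=x_js_j+\alpha(s_j\Delta x_j+x_j\Delta s_j)+\alpha^2 \Delta x_j\Delta s_j=(1-\alpha)x_js_j+\alpha\sigma\mu+\alpha^2\Delta x_j\Delta s_j,
\]
where I used the $j$-th component of \eqref{third_equation}. Summing over $j$ and dividing by $n$ gives
\[
\frac{(x+\alpha\Delta x)^T(s+\alpha\Delta s)}{n}=\mu[1-\alpha(1-\sigma)]+\frac{\alpha^2}{n}\Delta x^T\Delta s.
\]
Next I would use \eqref{ipcg_criterion_1} to bound $|\Delta x_j\Delta s_j|\le M^2 x_js_j\le M^2\mu/\gamma$ (using also \eqref{criterion_2}), and correspondingly $|\Delta x^T\Delta s|\le M^2 n\mu$. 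These are the only tools needed; the rest is bookkeeping.

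\emph{Step 2: deriving each of the four bounds.} For \eqref{proof_ineq_1}, after substituting the expansions and using $x_js_j\ge\gamma\mu$, the terms of order $\alpha^0$ cancel and what remains is $\alpha\sigma\mu(1-\gamma)\ge\alpha^2\bigl(\gamma\Delta x^T\Delta s/n-\Delta x_j\Delta s_j\bigr)$, which after dividing by $\alpha$ and bounding the right side by $\alpha M^2\mu(1+\gamma^2)/\gamma$ yields the threshold $\alpha\le\sigma_{\min}\gamma(1-\gamma)/[M^2(1+\gamma^2)]$. For \eqref{proof_ineq_2}, the same manipulation with $x_js_j\le\mu/\gamma$ produces two nonpositive terms plus $\alpha^2(\Delta x_j\Delta s_j-\Delta x^T\Delta s/(n\gamma))\le 2M^2\alpha^2\mu/\gamma$, giving $\alpha\le\sigma_{\min}(1-\gamma)/(2M^2)$. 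For \eqref{proof_ineq_3}, the global expansion gives
\[
\alpha\Delta x^T\Delta s/n\le(0.99-\sigma)\mu,
\]
and bounding the left side by $\alpha M^2\mu$ gives $\alpha\le(0.99-\sigma_{\max})/M^2$. Finally, for \eqref{proof_ineq_4}, rearranging with $\eta=1-\omega\alpha$ and using $n\mu=x^Ts$ yields $(\omega-1+\sigma)x^Ts+\alpha\Delta x^T\Delta s\ge 0$; by the hypothesis $\omega\ge 1-\sigma+\delta$ the coefficient is at least $\delta$, so the inequality reduces to $\alpha\le\delta/M^2$.

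\emph{Step 3: conclusion.} Taking $\tilde\alpha$ equal to the minimum of these four quantities together with $1$ gives exactly \eqref{alpha_bound}; each of \eqref{proof_ineq_1}--\eqref{proof_ineq_4} is monotone in the sense that if it holds at $\tilde\alpha$ it holds for every smaller $\alpha$, since each bound arose after dividing out one factor of $\alpha$. The main (minor) obstacle is keeping the sign conventions straight in \eqref{proof_ineq_1} and \eqref{proof_ineq_2} and verifying that the coefficients coming from the neighbourhood constraints can be consolidated uniformly in $j$ via $x_js_j\le\mu/\gamma$ and $|\Delta x_j\Delta s_j|\le M^2\mu/\gamma$; the presence of the quadratic term $Q$ in the original system does not affect the argument because equation \eqref{third_equation} is identical to the LP case and the proof operates only on primal/dual products. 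The requirement $\sigma_{\max}<0.99$ is implicit in the third bound being strictly positive; if equality or a larger $\sigma_{\max}$ were permitted, the Armijo constant $0.01$ in \eqref{armijo} would have to be tightened accordingly.
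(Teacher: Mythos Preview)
Your proposal is correct and follows essentially the same approach as the paper's proof: expand the shifted products via \eqref{third_equation}, bound the cross terms $\Delta x_j\Delta s_j$ and $\Delta x^T\Delta s$ using \eqref{ipcg_criterion_1} together with the neighbourhood bound \eqref{criterion_2}, and reduce each of \eqref{proof_ineq_1}--\eqref{proof_ineq_4} to a threshold on $\alpha$. The paper records the intermediate two-sided bounds on the componentwise and aggregate products before combining them pairwise, whereas you combine them directly, but the argument and the resulting thresholds are identical.
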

\begin{proof}
Start by noticing that~\eqref{ipcg_criterion_1} implies these two facts:
\begin{enumerate}
\item the positivity constraints $x+\alpha\Delta x>0$ and $s+\alpha\Delta s>0$ are automatically satisfied for any $\alpha\in[0,\frac{1}{M}[$;
\item the following bounds hold
\begin{equation}\label{proof_bound}|\Delta x_j\Delta s_j|\le \frac{M^2}{\gamma}\mu,\qquad |\Delta x^T\Delta s|\le M^2n\mu.\end{equation}
\end{enumerate}
Using \eqref{proof_bound}, \eqref{third_equation} and \eqref{criterion_2}, it is easy to show that the following inequalities hold
\begin{subequations}\label{ineq_proof}
\begin{equation}\label{ineq_1}(x_j+\alpha\Delta x_j)(s_j+\alpha\Delta s_j)\ge(1-\alpha)\gamma\mu+\alpha\sigma\mu-\alpha^2M^2\mu/\gamma,\end{equation}
\begin{equation}\label{ineq_2}(x_j+\alpha\Delta x_j)(s_j+\alpha\Delta s_j)\le(1-\alpha)\mu/\gamma+\alpha\sigma\mu+\alpha^2M^2\mu/\gamma,\end{equation}
\begin{equation}\label{ineq_3}(x+\alpha\Delta x)^T(s+\alpha\Delta s)/n\ge(1-\alpha)\mu+\alpha\sigma\mu-\alpha^2M^2\mu,\end{equation}
\begin{equation}\label{ineq_4}(x+\alpha\Delta x)^T(s+\alpha\Delta s)/n\le(1-\alpha)\mu+\alpha\sigma\mu+\alpha^2M^2\mu.\end{equation}
\end{subequations}

\noindent Using \eqref{ineq_1} and \eqref{ineq_4}, it follows that
\begin{align}&(x_j+\alpha\Delta x_j)(s_j+\alpha\Delta s_j)-\gamma(x+\alpha\Delta x)^T(s+\alpha\Delta s)/n\ge\notag\\
&\ge(1-\alpha)\gamma\mu+\alpha\sigma\mu-\alpha^2M^2\frac{\mu}{\gamma}-\gamma((1-\alpha)\mu+\alpha\sigma\mu+\alpha^2M^2\mu)\ge\notag\\
&\ge\alpha\sigma_\text{min}\mu(1-\gamma)-\alpha^2M^2\mu(\gamma+1/\gamma)\notag
\end{align}
and thus \eqref{proof_ineq_1} is satisfied if the final expression is non-negative, i.e.\
\[\alpha\le\dfrac{\sigma_\text{min}\gamma(1-\gamma)}{M^2(1+\gamma^2)}.\]

\noindent In a similar way, using \eqref{ineq_2} and \eqref{ineq_3}, it can be shown that \eqref{proof_ineq_2} is satisfied if
\[\alpha\le\dfrac{\sigma_\text{min}(1-\gamma)}{2M^2},\]
and using \eqref{ineq_4}, \eqref{proof_ineq_3} is satisfied if
\[\alpha\le\dfrac{0.99-\sigma_\text{max}}{M^2}.\]


\noindent Using \eqref{ineq_3} and setting $\eta = 1 - \omega \alpha$, it follows that
\begin{align}
&(x+\alpha\Delta x)^T(s+\alpha\Delta s)-(1-\omega\alpha)x^Ts\ge\notag\\
&\ge(1-\alpha)x^Ts+\alpha\sigma x^Ts-\alpha^2M^2x^Ts-(1-\omega\alpha)x^Ts\ge\notag\\
&\ge\alpha(\sigma+\omega-1)x^Ts-\alpha^2M^2x^Ts\notag
\end{align}
and thus \eqref{proof_ineq_4} is satisfied if the final expression is non-negative, i.e.\
\[\alpha\le\dfrac{\sigma+\omega-1}{M^2}.\]
This condition makes sense only if $\omega>1-\sigma$; therefore, at each IPM iteration, after choosing $\sigma$, $\omega$ should be chosen from the interval $]1-\sigma,1]$. Notice what this means: in the early IPM iterations, $\sigma$ is closer to 1 and thus $\omega$ can be closer to 0, which makes the stop criterion easier to satisfy. In the later iterations, $\sigma$ might get closer to 0 and thus $\omega$ is closer to 1, which makes the stop criterion harder to satisfy. If $\omega$ is chosen such that $\omega\ge1-\sigma+\delta$, with $\delta$ a fixed constant, $\delta<\sigma_\text{min}$, then $\omega+\sigma-1\ge\delta$ and it follows that
\[\alpha\le\dfrac{\delta}{M^2}\quad\Rightarrow\quad\alpha\le\frac{\sigma+\omega-1}{M^2}.\]
This explains the choice of $\omega$ made in the statement of the Lemma.

Therefore, the minimum stepsize that can be taken at each IPM iteration is given by
\[\tilde\alpha=\min\Big(\frac{\sigma_\text{min}\gamma(1-\gamma)}{M^2(1+\gamma^2)},\frac{\sigma_\text{min}(1-\gamma)}{2M^2},\frac{0.99-\sigma_\text{max}}{M^2},\frac{\delta}{M^2},1\Big).\]

\end{proof}

Notice that inequalities \eqref{proof_ineq_1}-\eqref{proof_ineq_2} imply that the next IPM iteration satisfies condition~\eqref{criterion_2}; the inequality~\eqref{proof_ineq_3} represents the Armijo condition, while inequality~\eqref{proof_ineq_4} implies that
\[\frac{\|r_P^k\|}{\mu^k}\le\frac{\eta_k\|r_P^{k-1}\|}{\mu^k}\le\frac{\|r_P^{k-1}\|}{\mu^{k-1}}\le\frac{\beta\|r_P^0\|}{\mu^0}\]
and similarly for the dual residual, which is equivalent to condition~\eqref{criterion_3}. 

To obtain a polynomial complexity result, the value of $M$ should be specified as a function of $n$. Here, the complexity analysis becomes problematic, since it is difficult to determine exactly the properties of the IPM directions at intermediate Krylov iterations. This is the subject of further research, but for now a rationale is given, based on the properties of the exact directions. To start, recall the results in \cite[Chapter 6]{wright} about convergence of LPs (similar results for QPs can be found in \cite{wright_qp}): a minimum stepsize, proportional to $n^{-2}$ can be found at each iteration, provided that the starting point is chosen appropriately. In the following, this result is generalized to a generic starting point, under some mild assumptions.

\begin{lemma}
Suppose that the optimal solution $(x^*,s^*)$ satisfies $0\le x^*_i,s^*_i\le\xi$, for some large constant $\xi$. Given any starting point $(x^0,y^0,s^0)\in\mathcal N_\infty(\gamma,\beta)$ such that $0<x_i^0,s_i^0\le\xi\ \forall i$ and $\mu^0>\varepsilon^*>0$, the minimum step size for an exact IPM applied to an LP is $\bar\alpha\ge C_3/n^3$, for some positive constant $C_3$ independent of $n$.
\end{lemma}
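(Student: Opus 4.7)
The plan is to adapt the classical complexity analysis for infeasible primal-dual interior-point methods (as in Wright's Chapter 6) to the weaker starting-point hypotheses made here. First, I would translate the neighborhood condition \eqref{criterion} together with $\mu^0 > \varepsilon^*$ and $x^0_j, s^0_j \le \xi$ into the uniform componentwise lower bound $x^0_j, s^0_j \ge \gamma\varepsilon^*/\xi$. The $\mathcal N_\infty(\gamma,\beta)$ neighborhood then propagates these bounds in a controlled way, so that at every iteration one has $\gamma\mu^k \le x^k_j s^k_j \le \mu^k/\gamma$ and the relative residuals remain bounded by $\beta$ times their initial values.

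The heart of the proof is a bound on the scaled Newton direction. I would construct a primal-dual feasible reference point $(\hat x,\hat y,\hat s)$ that componentwise dominates the optimal solution $(x^*,s^*)$, obtained as a scaled convex combination of $(x^0,s^0)$ and $(x^*,s^*)$; the assumption $x^*_j, s^*_j \le \xi$ together with the uniform lower bound on $(x^0,s^0)$ guarantees that the required scaling factor depends only on $\xi,\gamma,\varepsilon^*$ and not on $n$. Following the standard technique (analogous to Wright's Lemma 6.5), this leads to a bound of the form
\[\|D_k^{-1}\Delta x^k\|^2 + \|D_k\Delta s^k\|^2 \le C_1 n^3 \mu^k, \qquad D_k=(X^k)^{1/2}(S^k)^{-1/2}.\]
The cubic dependence on $n$ splits as follows: one factor $n$ comes from the complementarity right-hand side $\|\sigma\mu e-XSe\|^2 \le n\mu^2/\gamma$; one factor $n$ from the initial infeasibility $\|r^0\|^2 = O(n)$, since the residual components are controlled by $\xi$, propagated along iterations via the ratio $\|r^k\|/\mu^k \le \beta\|r^0\|/\mu^0$; and one additional factor $n$ from the looser starting-point-to-optimum scaling in the generic case (compared to Wright's specifically chosen large-$\rho$ starting point, which saves one factor). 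Componentwise this gives $|\Delta x^k_j \Delta s^k_j| \le \tfrac{1}{2}\bigl(\|D_k^{-1}\Delta x^k\|^2 + \|D_k\Delta s^k\|^2\bigr) \le C_1 n^3 \mu^k$.

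To conclude, I would rerun the calculations of the proof of Lemma~\ref{lemma_alpha_bound} with the quadratic-in-$\alpha$ term now bounded by $\alpha^2 C_1 n^3 \mu^k$ instead of by $\alpha^2 M^2\mu^k/\gamma$. Requiring this quadratic term to be dominated by the linear reductions in the neighborhood inequalities \eqref{proof_ineq_1}--\eqref{proof_ineq_2}, the Armijo condition \eqref{proof_ineq_3}, and the infeasibility condition \eqref{proof_ineq_4} yields $\bar\alpha \ge C_3/n^3$ with $C_3$ depending only on $\gamma,\beta,\sigma_\text{min},\sigma_\text{max},\xi,\varepsilon^*$. The main obstacle is the inductive propagation of $C_1$ across IPM iterations: one has to show that the reference-point scaling stays uniformly bounded along the IPM trajectory, which in turn requires that $(x^k,s^k)$ themselves stay uniformly bounded. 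This follows from the $\mathcal N_\infty(\gamma,\beta)$ neighborhood together with the monotone decrease of $\mu^k$ and the controlled growth of the infeasibility residuals, but the precise accounting of constants is the delicate part of the argument, and is the reason a full rigorous proof in the inexact setting (where the direction is only an approximate Newton step) is postponed to further work.
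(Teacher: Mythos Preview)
Your approach is correct in spirit but takes a considerably longer route than the paper, and introduces some unnecessary complications.

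The paper's proof is essentially a three-line bookkeeping exercise: it recalls Wright's constants $C_1$ and $C_2$ (from his Lemmas 6.3 and 6.5) verbatim, then bounds them under the new starting-point hypotheses. From $x^0_i s^0_i\ge\gamma\mu^0>\gamma\varepsilon^*$ and $s^0_i\le\xi$ it gets the same lower bound $x^0_i,s^0_i\ge\gamma\varepsilon^*/\xi$ you derived; combined with $\|(x^*,s^*)\|_1\le 2n\xi$ and $\|x^0-x^*\|,\|s^0-s^*\|\le\xi\sqrt n$, this gives $C_1=\mathcal O(n)$ and hence $C_2=\mathcal O(n^{3/2})$. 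Since Wright's Lemma~6.7 already states that the minimum stepsize in the exact algorithm is proportional to $C_2^{-2}$, the conclusion $\bar\alpha\ge C_3/n^3$ follows immediately. No direction bound is re-derived, and no stepsize inequalities are reproved.

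Your proposal instead re-derives the scaled-direction estimate $\|D_k^{-1}\Delta x^k\|^2+\|D_k\Delta s^k\|^2\le C_1 n^3\mu^k$ from scratch and then feeds it back through the calculations of Lemma~\ref{lemma_alpha_bound}. This works, but Lemma~\ref{lemma_alpha_bound} is tailored to the \emph{inexact} criterion \eqref{ipcg_criterion_1}--\eqref{ipcg_criterion_2}; for the exact direction you are simply reproducing Wright's Lemma~6.7, so invoking it directly (as the paper does) is cleaner. More importantly, your concern about ``inductive propagation of $C_1$ across IPM iterations'' and the boundedness of $(x^k,s^k)$ is unnecessary here: Wright's $C_1$ and $C_2$ depend only on the starting point and the optimal solution, and his Lemmas 6.3--6.7 already establish that the resulting stepsize bound holds uniformly at every iteration of the exact method. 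There is nothing to propagate. Your closing remark about difficulties ``in the inexact setting'' is also out of place for this lemma, which is explicitly about the exact IPM; that caveat belongs to the surrounding discussion, not to this proof.
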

\begin{proof}
Recall the following constants from \cite[Lemma 6.3 and 6.5]{wright}, used to find the minimum stepsize for an LP:
\[C_1=\Big(\beta n+n+\beta\frac{\text{max}^0}{\mu^0}\|(x^*,s^*)\|_1\Big)\frac{1}{\text{min}^0},\]
\[C_2=2\frac{C_1}{\gamma^{1/2}}\max\big(\|x^0-x^*\|,\|s^0-s^*\|)+\frac{n}{\gamma^{1/2}},\]
where $(x^*,s^*)$ is the optimal solution, $\text{min}^0$ and $\text{max}^0$ are the minimum and maximum components, respectively, of the vector $(x^0,s^0)$. Here, $(x,s)$ indicates the vector obtained stacking vertically the vectors $x$ and $s$.

From the definition of the neighbourhood \eqref{criterion} and the hypothesis used, it follows that, for each component $i$, $x^0_is^0_i\ge\gamma\mu^0>\gamma\varepsilon^*$ and thus
\[x^0_i>\frac{\gamma\varepsilon^*}{s^0_i}\ge\frac{\gamma\varepsilon^*}{\xi}.\]
Therefore $x_i^0, s_i^0\in[\gamma\varepsilon^*/\xi,\xi],\,\,\forall i$. Hence $\text{min}^0\ge\gamma\varepsilon^*/\xi$ and $\text{max}^0\le\xi$. Notice also that $\|(x^*,s^*)\|_1\le2n\xi$. Therefore
\[C_1\le\Big(\beta n+n+2\beta\frac{\xi^2}{\varepsilon^*}n\Big)\frac{\xi}{\gamma\varepsilon^*}.\]
Given that $x_i^0,s_i^0,x_i^*,s_i^*$ all belong to the interval $[0,\xi]$, for each component $i$, it follows that
\[\|x^0-x^*\|^2=\sum_{i=1}^n(x^0_i-x^*_i)^2\le\sum_{i=1}^n\xi^2=\xi^2n,\]
and the same holds for $\|s^0-s^*\|$. Therefore
\[C_2\le n^{3/2}\Big(\frac{2\xi^2}{\gamma^{3/2}\varepsilon^*}\Big)\Big(\beta+1+2\frac{\beta\xi^2}{\varepsilon^*}\Big)+\frac{n}{\gamma^{1/2}}\]
which implies $C_2\le\mathcal O(n^{3/2})$. The minimum stepsize $\bar\alpha$ that can be taken at each iteration in the exact IPM is proportional to $C_2^{-2}$ as shown in \cite[Lemma 6.7]{wright}, thus $\bar\alpha\ge C_3n^{-3}$.
\end{proof}

Therefore, if the starting point is not the optimal one indicated in \cite{wright}, the minimum stepsize is proportional to $n^{-3}$, instead of $n^{-2}$.  Consider the IPM termination criterion \eqref{IPM_termination} and property \eqref{criterion_3}; then, the algorithm converges if
\[\mu\le\min\Big(\tau_\mu,\tau_P\|b\|\frac{\mu^0}{\beta\|r_P^0\|},\tau_D\|c\|\frac{\mu^0}{\beta\|r_D^0\|}\Big)=\colon\varepsilon^*.\]
Therefore, assuming that the iterates of the {\it inexact} algorithm are bounded by $\xi$, at each iteration two things can happen: if $\mu\le\varepsilon^*$, then the algorithm converged; if $\mu>\varepsilon^*$, then the current point can be seen as a starting point of an exact IPM where the minimum stepsize is $\bar\alpha\ge C_3n^{-3}$.

Given that the stepsize must be strictly smaller than the step to the boundary \eqref{compute_alpha}, it is immediate to see that, when using an {\it exact} direction, the following holds
\[\frac{-\Delta x_j}{x_j}\le\frac{n^3}{C_3}\quad\forall j\,\,\text{s.t.}\,\,\Delta x_j<0,\qquad\frac{-\Delta s_j}{s_j}\le\frac{n^3}{C_3}\quad\forall j\,\,\text{s.t.}\,\,\Delta s_j<0.\]
Additionally, using \eqref{thirdequation_order1}, one can see that
\[\Big|\frac{\Delta x_j}{x_j}\Big|\le \mathcal O(n^3),\qquad \Big|\frac{\Delta s_j}{s_j}\Big|\le \mathcal O(n^3),\qquad\forall j,\]
since the terms $\frac{\Delta x_j}{x_j}$ and $\frac{\Delta s_j}{s_j}$ must balance in order to give a sum that is $\mathcal O(1)$.

Therefore, when using an {\it exact} IPM for LPs, with a generic starting point, the computed direction satisfies criterion \eqref{ipcg_criterion_1} with $M=\mathcal O(n^3)$. This provides a rationale to expect that {\it inexact} steps applied should satisfy conditions like \eqref{ipcg_criterion_1} with a constant $M$ of comparable magnitude. Therefore, the authors infer that it is possible to use a constant $M=\mathcal O(n^q)$, with $q\ge3$. This is of course only a rationale: a proper proof would require to understand whether the chosen Krylov method is able to deliver such a direction; potentially, an exponent $q$ specific to the linear solver used may be found, but this has shown to be complicated and is the subject of further research. Notice also that the rationale argument is given for an LP, but the criterion is used for QPs (similar arguments can be found for QPs, based on the results in \cite{wright_qp}).

Given $M=\mathcal O(n^q)$, \cite[Theorem 3.2]{wright} and Lemma~\ref{lemma_alpha_bound} imply that the number of iterations to achieve a $\nu-$accurate solution would be $\mathcal O(n^{2q}|\log\nu|)$. In the best case where $q=3$, this would mean a number of iterations proportional to $n^6$; this is higher than the $\mathcal O(n^2)$ iterations required by the exact algorithm, as it is to be expected from the very inexact stopping criterion considered.

\begin{remark}
The analysis presented in this section has used the results from \cite[Chapter 6]{wright}; it is worth pointing out that the results presented there are obtained using a neighbourhood without the upper bound in~\eqref{criterion_2}. However, with some simple calculations, it is possible to see that the final results do not change after adding the upper bound. A similar conclusion was obtained in \cite{colombo_gondzio}, where the upper bound was added in the case of a feasible algorithm.
\end{remark}

\subsection{Indicators for early stopping}
In this section, new indicators are derived that can be used to terminate the inner linear iterations early, before the relative residual has become small enough to be accepted by a standard residual test. The behaviour is shown for one of the test problems that are presented later, but the same pattern can be observed also for the other test problems. This specific problem is a QP without linear constraints that arises from tomographic imaging; thus, the normal equations approach is used, but only the dual infeasibility can be defined.

The indicators that are introduced are based on the complexity argument given in the previous section; they are related to the following quantities:
\begin{itemize}
\item $M_x=\max_i |\frac{\Delta x_i}{x_i}|$ and $M_s=\max_i |\frac{\Delta s_i}{s_i}|$
\item infeasibilities: $p_\text{inf}=b-Ax$ and $d_\text{inf}=c-A^Ty-s$
\item complementarity measure: $\mu=(x^Ts)/n$.
\end{itemize}
These are computed at each inner Krylov iteration using the IPCG or IPMINRES algorithms; an index $j$ is used to indicate the value obtained at the inner iteration $j$ and the IPM iteration index is omitted instead. Therefore, $M_x^j$ means the value of the quantity $M_x$ that would be obtained by stopping at the $j-$th inner iteration, for a given IPM iteration.

Figure~\ref{fig_ipm_iter} displays the behaviour of the dual infeasibility, complementarity, primal and dual stepsizes and the quantities $M_x$ and $M_s$ at an intermediate IPM iteration; they are computed at every inner CG iteration, using Algorithm~\ref{ipcg_alg}.

\begin{figure}[h]
\caption{Infeasibility, complementarity, stepsizes and quantities $M_x$ and $M_s$ computed at every CG iteration, for an intermediate IPM iteration.}
\label{fig_ipm_iter}
\centering
\includegraphics[width=\textwidth]{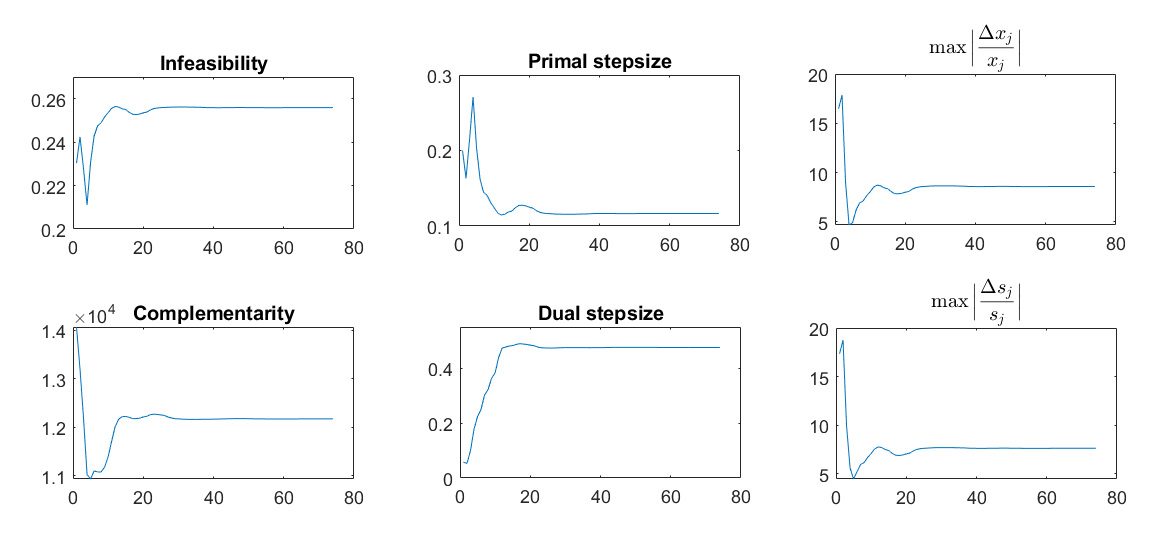}
\end{figure}

It can be seen that all the quantities represented reach a point where their variation becomes extremely small, almost impossible to notice from the picture; this "stagnation" point may arrive very early in the CG iterations, meaning that a large portion of the inner iterations are used to adjust the IPM direction in a way that has a small effect on the quality of the new IPM point.

This fact suggests the following early termination indicators:
\[\texttt{var}_P^j=\frac{1}{5}\sum_{i=0}^4\Biggl|\frac{\|p_\text{inf}^{j-i}\|-\|p_\text{inf}^{j-i-1}\|}{\|p_\text{inf}^{j-i-1}\|}\Biggr|,\quad\texttt{var}_D^j=\frac{1}{5}\sum_{i=0}^4\Biggl|\frac{\|d_\text{inf}^{j-i}\|-\|d_\text{inf}^{j-i-1}\|}{\|d_\text{inf}^{j-i-1}\|}\Biggr|\]
\[\texttt{var}_{Mx}^j=\frac{1}{5}\sum_{i=0}^4\Biggl|\frac{\|M_x^{j-i}\|-\|M_x^{j-i-1}\|}{\|M_x^{j-i-1}\|}\Biggr|,\quad\texttt{var}_{Ms}^j=\frac{1}{5}\sum_{i=0}^4\Biggl|\frac{\|M_s^{j-i}\|-\|M_s^{j-i-1}\|}{\|M_s^{j-i-1}\|}\Biggr|\]
These are the average relative variations, in the last five inner iterations, of the quantities $p_\text{inf}$, $d_\text{inf}$, $M_x$ and $M_s$. From the previous Figure, one expects these quantities to decrease during the CG iterations and, since they are related to the IPM convergence, to be better indicators than the simple relative residual of the linear system. For some problems it can be useful to consider also the indicator $\texttt{var}_\mu$ defined in the same way as before but considering the complementarity measure $\mu$.

Figure~\ref{indicators_comparison} shows the proposed indicators compared to the relative residual, at every CG iteration, during the computation of various IPM directions. Notice that these are only some of the behaviours that were observed; the purpose of these images is to show that the indicators can sometimes decrease similarly to the residual, while on occasions they may display an erratic behaviour, which is difficult to capture looking only at the residual. 

\begin{figure}[h]
\caption{Various behaviours of the proposed indicators compared to the relative residual}
\label{indicators_comparison}
\centering
\subfloat[]{\includegraphics[width=.3\textwidth]{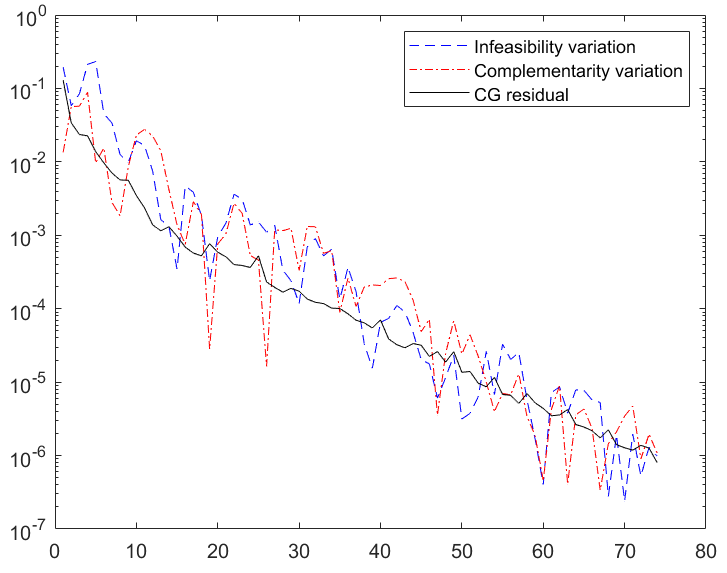}}\quad
\subfloat[]{\includegraphics[width=.3\textwidth]{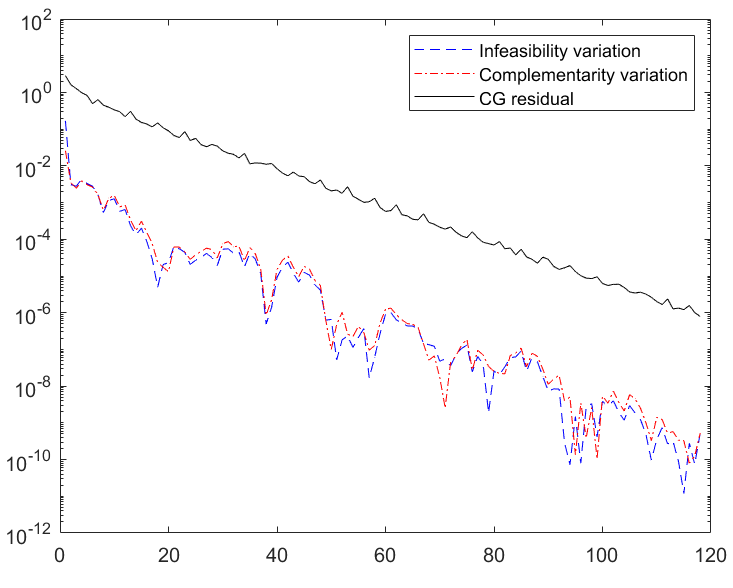}}\\
\subfloat[]{\includegraphics[width=.3\textwidth]{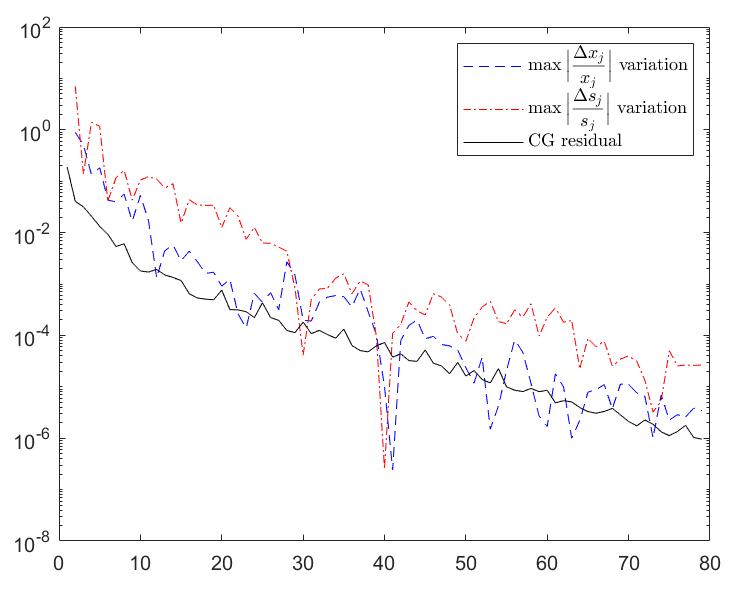}}\quad
\subfloat[]{\includegraphics[width=.3\textwidth]{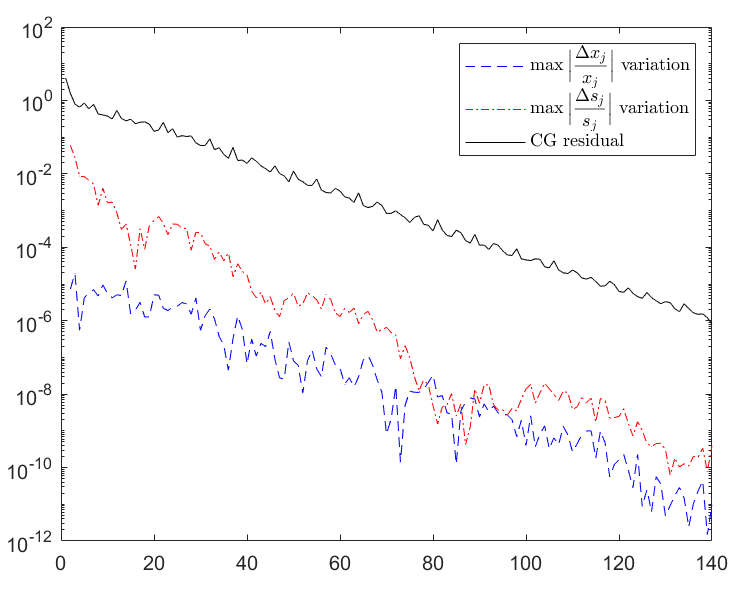}}
\end{figure}

To summarize, the following termination criterion is proposed: the inner iterations are stopped if
\[\Big((\texttt{var}_P^j<\varepsilon)\,\, \wedge\,\, (\texttt{var}_D^j<\varepsilon)\,\, \wedge\,\, (\texttt{var}_{Mx}^j<\varepsilon)\,\, \wedge\,\, (\texttt{var}_{Ms}^j<\varepsilon)\Big)\,\,\vee\,\,\big(\|r\|/\|r_0\|<\tau_\text{inner}\big).\]
The first four conditions check if the new indicators are all smaller than a tolerance $\varepsilon$; however, if it happens that the residual gets sufficiently small before the new indicators do, then the stopping criterion is triggered anyway, as with a standard residual test.

The practical criterion presented here is clearly a different technique than the theoretical one shown in Algorithm~\ref{ipm_alg}; however, it is strongly inspired by the arguments of the previous Section. The indicators considered involve the same quantities used in the criterion \eqref{ipcg_criterion_1}-\eqref{ipcg_criterion_2} and the condition of small relative variations ensures that the inexact direction found is likely to produce a point that gets close to satisfying the theoretical criterion as well. Some safeguards are required in order to keep the behaviour of the practical criterion close to the ideal one: in particular, the practical criterion may be triggered by chance in the very first linear iterations, when the theoretical stopping criterion is not yet satisfied. For this reason, the parameter {\tt itstart} is important since it prevents this phenomenon from happening.

Such a difference between theoretical and practical methods is not unusual in the IPM literature, where often the theoretical properties are proven for the methods, but to achieve the best performance the practical algorithms slightly deviate from the rigorous theoretical settings.


\section{Numerical results}
\label{section_results}
In this section, the test problems are introduced and the results obtained with the standard CG or MINRES and with the novel IPCG or IPMINRES are presented. This section shows overall results in terms of IPM iterations, inner iterations and computational time, and then provides also an insight into the individual IPM iterations to demonstrate where the gains resulting from the new method are the most significant. 

The numerical experiments were performed using MATLAB R2018a and were run on the University of Edinburgh School of Mathematics computing server, which is equipped with four 3.3GHz octa-core Intel Gold 6234 processors and 500GB of RAM; the experiments never used more than 4 cores and 20GB of memory.

The new technique is compared with two options that are usually employed when dealing with a Krylov method inside an IPM: a fixed tolerance on the relative residual of the linear system and a variable tolerance proportional to the complementarity measure $\mu$ (see e.g.\ \cite{stop_inner,gondzio_inexact,inexact_morini}). In particular, the tolerance for the second option is chosen at each IPM iteration $k$ as
\[\tau_\text{inner}^k=\max(\text{{\tt tol}}^\text{max},\frac{\mu^k}{\mu^0}\,\text{{\tt tol}}^0)\]
where $\text{{\tt tol}}^0$ is the initial tolerance, $\text{{\tt tol}}^\text{max}$ is the best allowed tolerance, $\mu^k$ is the value of the complementarity measure at the current iteration and $\mu^0$ is the initial one. In this way, the tolerance decreases at the same rate as $\mu$ until it reaches the value $\text{{\tt tol}}^\text{max}$. In the following, these two options are denoted as {\tt fixtol} and {\tt vartol} respectively.

Despite having multiple options available to choose a variable tolerance, the authors compared the results with this one, since it is very simple and widespread. There may be other tolerance sequences, tailored specifically to the problem considered, that produce better results; however, the new criterion that is introduced does not need to be redesigned for a specific problem and hence a variable tolerance was selected in the same simple way for all problems. It is worth pointing out that other specialized stopping criteria, developed for different problems (e.g. \cite{stop_fem,stop_axelsson,freitag_spence,semiconvergence,notay_cg,silvester_simoncini,stathopoulos}) cannot be easily generalized and used inside an IPM, since the quantities used for these criteria may not even have a meaningful interpretation in this context.

 The values of $\text{{\tt tol}}^0$ and $\text{{\tt tol}}^\text{max}$ were chosen after a quick tuning process in order to obtain the best results with the variable tolerance method; the same holds for the parameters $\varepsilon$ and {\tt itstart} of the new stopping criterion. The specific values are given below for each problem class.

\subsection{Tomographic reconstruction}
The first test problem involves the reconstruction of an image obtained with a dual-energy x-ray tomography \cite{xray}. This is a classical inverse problem in many practical fields, from medicine to industrial applications. The noise in the measurements and the requirement of using as few angles of measurement as possible (e.g.\ to minimize the radiation dose to a patient), make this kind of problem challenging. The goal is to understand a spatial distribution of two different materials, for example the bone and soft tissue; to do so, the domain of interest is discretized and the information about the concentration of the two materials in the points of the discretization is stored in two vectors $x_1, x_2\in\mathbb R^n$. In \cite{xray}, the authors propose a new regularization technique which replaces the standard Joint Total Variation approach and exploits the inner product $x_1^Tx_2$ to enforce the separation of the two materials. 

Stacking together the vectors $x_1$ and $x_2$ into a single vector $x\in\mathbb R^{2n}$, the optimization problem that arises takes the following form
\[\min_{x\ge0}\ \|w-\mathcal Gx\|^2+\rho\|x\|^2+2\eta\,x_1^Tx_2,\] 
where $w$ is the measurement vector and $\mathcal G$ is an operator that incorporates information about the geometry of the problem and the materials used; $\rho$ is the coefficient for the Tikhonov regularization and $\eta$ for the novel regularizer. Written as a standard QP, the problem reads
\[\min_{x\ge0}\ \frac{1}{2}x^T\mathcal Qx-w^T\mathcal Gx,\] 
where
\[\mathcal Q=\begin{bmatrix} c_{11}^2+c_{21}^2\ & c_{11}c_{12}+c_{21}c_{22}\\c_{11}c_{12}+c_{21}c_{22}\ & c_{12}^2+c_{22}^2\end{bmatrix}\otimes R^TR+\begin{bmatrix} \rho\ & \eta\\\eta\ & \rho\end{bmatrix}\otimes I.\]
Here $c_{11}$, $c_{12}$, $c_{21}$ and $c_{22}$ describe the attenuation constants of the two materials for the two x-ray energies used, while $R$ contains information about the geometry of the measurements and can only be accessed via matrix-vector products performed using the Radon transform; $\otimes$ denotes the Kronecker product. 

This optimization problem does not have linear equality constraints; if one applies an interior point method and formulates the normal equations, the linear system that arises has matrix $\mathcal Q+X^{-1}S$. The structure of matrix $R^TR$ allows the use of a block-diagonal preconditioner
\[P=\begin{bmatrix}(c_{11}^2+c_{21}^2)\nu I+\rho I & (c_{11}c_{12}+c_{21}c_{22})\nu I+\eta I\\ (c_{11}c_{12}+c_{21}c_{22})\nu I+\eta I & (c_{12}^2+c_{22}^2)\nu I+\rho I\end{bmatrix}+X^{-1}S,\]
where $\nu$ approximates the main diagonal of the blocks in $R^TR$. Therefore, it is possible to apply the CG with this positive definite preconditioner to find the IPM direction. The application of the matrix of the system is particularly expensive, since it involves the call of the Radon and inverse Radon transforms, to apply $R$ and $R^T$ respectively; thus, a single CG iteration is particularly expensive and the authors expect the IPCG to bring a substantial benefit.
 
An IPM with centrality correctors was applied to this problem: the IPM tolerance was set to $10^{-8}$, the CG tolerance for the {\tt fixtol} approach was $10^{-6}$ and the parameters for the {\tt vartol} approach were $\texttt{tol}^\text{max}=10^{-6}$, $\texttt{tol}^0=10^{-3}$. These parameters were selected because they allow fewer linear iterations, without compromising too much the quality of the inexact direction and the IPM convergence speed. The new IPCG approach was applied with parameters $\varepsilon=0.01$ and $0.001$, $\texttt{itstart}=5$ and $\tau_\text{inner}=10^{-6}$. Since the problems contain noise that is randomly initialized at every run, the results shown are the average over 10 runs, for each discretization level.

Table \ref{results_ue_standard} reports the results using the {\tt fixtol} and {\tt vartol} approaches. The parameter {\tt level} indicates how fine the discretization is; the size of the matrix is equal to $2\cdot\texttt{level}^2$, so that the largest instance has $524,288$ variables.

\begin{table}[h]
\footnotesize
\caption{Results with CG: {\tt fixtol} and {\tt vartol}}
\label{results_ue_standard}
\centering
\begin{tabular}{r|rrr|rrr}
\toprule
&\multicolumn{3}{c}{CG {\tt fixtol}}&\multicolumn{3}{c}{CG {\tt vartol}}\\
\cmidrule(lr){2-4}
\cmidrule(lr){5-7}
{\tt level} & IPM It & Inner It & Time & IPM It & Inner It & Time\\
\midrule
32 & 16.7 & 3,892.1 & 9.02 & 17.1 & 2,001.5 & 4.71\\
64 & 21.0 & 6,436.9 & 27.37 & 21.4 & 3,350.0 & 14.64\\
128 & 22.5 & 9,514.8 & 104.96 & 26.0 & 4,952.6 & 55.48\\
256 & 24.8 & 14,183.5 & 511.87 & 33.0 & 8,059.3 & 295.45\\
512 & 29.5 & 21,897.7 & 3,035.63 & 44.0 & 14,413.0 & 1,954.11\\
\bottomrule
\end{tabular}
\end{table}

Tables \ref{results_ue_ipcg1} and \ref{results_ue_ipcg2} instead show the results obtained with IPCG with $\varepsilon=0.01$ and $\varepsilon=0.001$ respectively; the last columns show the reduction in linear iterations and computational time when compared with the previous approaches.

\begin{table}[h]
\footnotesize
\caption{Results with IPCG ($\varepsilon=0.01$)}
\label{results_ue_ipcg1}
\centering
\begin{tabular}{r|rrr|rr|rr}
\toprule
&\multicolumn{3}{c}{IPCG}&\multicolumn{2}{c}{Reduction {\tt fixtol}} &\multicolumn{2}{c}{Reduction {\tt vartol}}\\
\cmidrule(lr){2-4}
{\tt level} & IPM It & Inner It & Time & Inner It \% & Time \%& Inner It \% & Time \%\\
\midrule
32 & 16.7 & 842.2 & 2.11 & 78.3 & 76.6 & 57.9 & 55.2\\
64 & 20.5 & 1,329.1 & 6.67 & 79.4 & 75.6 & 60.3 & 54.4\\
128 & 22.7 & 1,688.1 & 22.73 & 82.2 & 78.3 & 65.9 & 59.0\\
256 & 26.8 & 2,090.2 & 93.46 & 85.3 & 81.7 & 74.1 & 68.4\\
512 & 34.0 & 2,849.1 & 509.11 & 87.0 & 83.2 & 80.2 & 73.9\\
\bottomrule
\end{tabular}
\end{table}

\begin{table}[h]
\footnotesize
\caption{Results with IPCG ($\varepsilon=0.001$)}
\label{results_ue_ipcg2}
\centering
\begin{tabular}{r|rrr|rr|rr}
\toprule
&\multicolumn{3}{c}{IPCG}&\multicolumn{2}{c}{Reduction {\tt fixtol}} &\multicolumn{2}{c}{Reduction {\tt vartol}}\\
\cmidrule(lr){2-4}
{\tt level} & IPM It & Inner It & Time & Inner It \% & Time \%& Inner It \% & Time \%\\
\midrule
32 & 16.5 & 1,219.6 & 3.07 & 68.7 & 66.0 & 39.0 & 34.8\\
64 & 20.5 & 1,870.8 & 8.77 & 70.9 & 68.0 & 44.1 & 40.1\\
128 & 22.0 & 2,538.2 & 33.82 &  73.3 & 67.8 & 48.8 & 39.0\\
256 & 25.5 & 3,475.7 & 152.67 & 75.5 & 70.2 & 56.9 & 48.3\\
512 & 30.3 & 4,950.0 & 859.22 & 77.4 & 71.7 & 65.7 & 56.0\\
\bottomrule
\end{tabular}
\end{table}

It is worth observing that when using IPM with the new stopping criterion, the number of outer (IPM) iterations is very close to the one obtained with the original IPM using {\tt fixtol}; this confirms that the inexact direction is sufficiently precise so as not to destroy the convergence properties of IPM. In particular, it can be noticed that using a lower tolerance $\varepsilon$ guarantees an IPM iteration count almost identical to the original one;  the {\tt vartol} approach instead produces a substantial increase in the IPM iterations, particularly for larger problems.

Observe also that the IPCG with $\varepsilon=0.01$ produces a similar number of IPM iterations as the {\tt vartol} approach, but uses far fewer inner iterations. This means that the new technique is better at choosing when to stop the linear iterations and does not compromise the overall IPM convergence more than a standard inexact IPM would.

The reduction in terms of linear iterations is very high and reaches values of more than $70$\% for both choices of $\varepsilon$ for the largest instance considered. This translates into a significant computational time reduction, which confirms that the operations added inside the IPCG algorithm are very cheap. Indeed, the time per CG iteration when $\texttt{level}=512$ goes roughly from 140ms in the case of standard CG to 175ms in the case of IPCG; a small increase which is offset by a large reduction in the number of inner iterations.
\begin{figure}[h]
\caption{{\bf (a)} CG iterations per IPM iteration and {\bf (b)} final relative residual for the three approaches considered}
\centering
\subfloat[]{\label{iter_comparison_ue}\includegraphics[width=.48\textwidth]{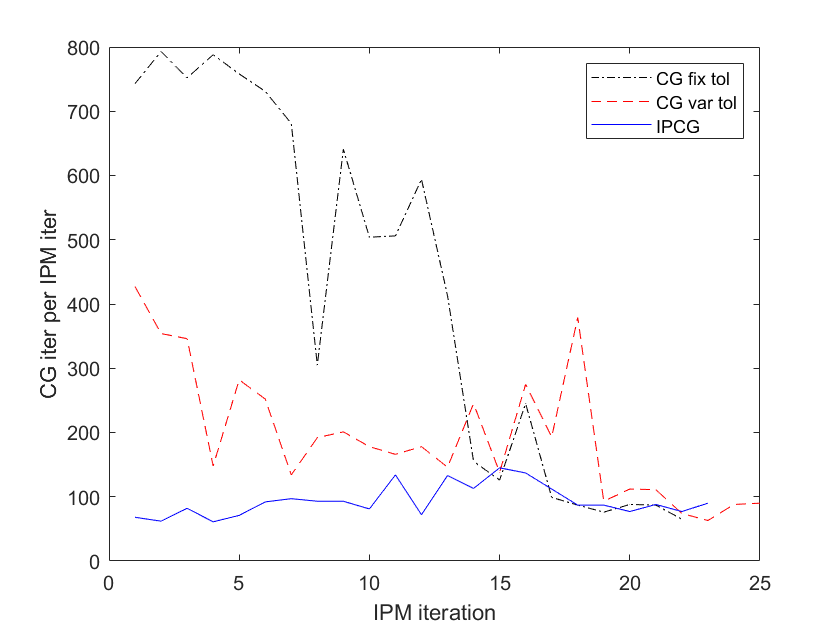}}
\subfloat[]{\label{residual_comparison_ue}\includegraphics[width=.48\textwidth]{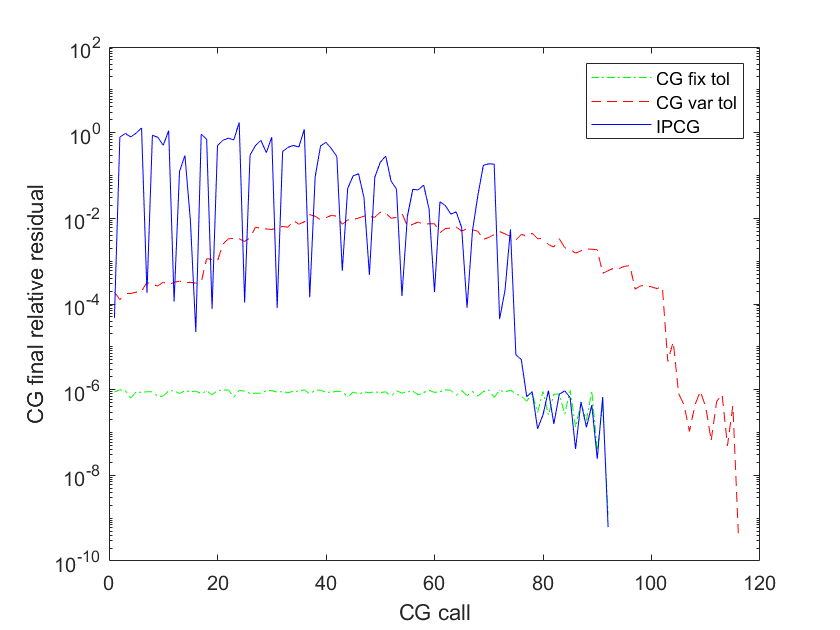}}
\end{figure}

Next, it is useful to understand how the gain of IPCG is distributed during the IPM iterations. To do this, the authors recorded the number of CG iterations at each IPM iteration (summing together the inner iterations for predictor and correctors) in three different situations: using CG with {\tt fixtol}; using CG with {\tt vartol}; using IPCG with $\varepsilon=10^{-3}$. Figure~\ref{iter_comparison_ue} shows the comparison of the iterations for the problem with $\texttt{level}=128$.

Notice that, when using standard CG with fixed tolerance, the number of iterations decreases at the end, since less correctors are computed; when using IPCG, this decrease is not observed, since the smaller number of correctors is balanced by the increased accuracy needed. Indeed, in the late IPM phase, the new stop criterion is not triggered and IPCG stops with the standard reduction test; the reader may observe that the two graphs overlap in the last iterations. However, in the initial phase, a significant advantage of IPCG over the standard CG can be noticed, both for the {\tt fixtol} and {\tt vartol} approaches. It is curious how the IPCG requires almost every time less inner iterations than the CG with {\tt vartol}, but still manages to converge in a smaller number of IPM iterations. This is because the number of inner iterations used for the predictor and for the correctors is distributed differently: the standard CG applies the same tolerance to all the directions during a certain IPM iteration, while IPCG chooses when to stop the inner iterations based on the improvement that the direction can bring to the IPM convergence. In this way, some correctors are computed very roughly, without spoiling the overall IPM convergence speed.

This is clear from the next analysis that was performed: the final relative residual at each CG call (for predictors and correctors) was recorded for all the three approaches; the results are shown in Figure~\ref{residual_comparison_ue}. Notice that the IPCG computes directions both more accurately and less accurately than the {\tt vartol} approach, depending on how much a certain direction is able to improve the quality of the IPM point; the surprising variability of the final residual suggests that there is much to be gained by an approach that does not involve only the residual tolerance, because otherwise it would not be possible to capture this behaviour. This graph highlights also that no stopping criterion based on a residual tolerance (potentially different from the {\tt vartol} approach considered here) could match the performance of the proposed solver, given the variability observed.

The graphs displayed in these two figures undeniably confirm that a high accuracy in the first IPM iterations is not needed at all, and that the best method to decide when a direction is sufficiently precise to perform the next IPM iteration successfully should be based on the IPM indicators and not on the residual of the linear system.

\subsection{Compressed sensing}
The second test problem arises from compressed sensing \cite{mfcs}: a sparse solution to an undetermined linear system $Ax=b$ is sought, where sparsity is enforced by means of a 1-norm regularization. After linearizing the 1-norm by adding extra variables, the optimization problem that arises is the following
\[\min_{z\ge0}\tau e^Tz+\frac{1}{2}\|F^Tz-b\|^2,\]
where $\tau>0$, $z=\begin{bmatrix}u\ ;&v\end{bmatrix}$, $u$ and $v$ being the positive and negative parts of vector $x$, and $F^T=\begin{bmatrix}A\ & -A\end{bmatrix}$. Rewriting it as a standard quadratic program and formulating the IPM normal equations, the matrix of the linear system to be solved is
\[H=\begin{bmatrix}1\ & -1\\-1\ & 1\end{bmatrix}\otimes A^TA+\Theta^{-1}.\]
Due to the structure of matrix $A$, which satisfies the restricted isometry property (see \cite{mfcs} for all the details), matrix $H$ can be efficiently preconditioned by the block diagonal matrix
\[P=\begin{bmatrix}1\ &-1\\-1\ &1\end{bmatrix}\otimes\eta I+\Theta^{-1}\]
for an appropriate constant $\eta$. The difference with respect to the first test problem is that now the IPM direction is computed using a very low accuracy for the CG: the residual tolerance is $10^{-1}$ or $10^{-2}$, depending on the problem, throughout all the IPM iterations.  Due to this very rough tolerance, the {\tt vartol} approach was not able to bring any substantial improvement. For this class of problems only the {\tt fixtol} approach was used.

The test problems are taken from the Sparco collection \cite{sparco}; of the 18 problems considered in \cite{mfcs}, 5 did not show any improvement when using IPCG instead of CG (in part because they were easy enough and the CG was already performing a low number of iterations). In Table~\ref{cs_results} the results for the remaining 13 that did show an improvement are presented. The IPM tolerance varies between $10^{-6}$ and $10^{-10}$ according to the problem being solved and no corrector direction is used. The default values for IPCG are $\varepsilon=0.01$ and $\texttt{itstart}=5$, but some problems required different parameters, which are indicated in the Table. 

\begin{table}[t]
\footnotesize
\caption{Results for Compressed sensing IPM.}
\label{cs_results}
\centering
\begin{tabular}{rr|rrr|rrr|rr}
\toprule
&&\multicolumn{3}{c}{CG {\tt fixtol}}&\multicolumn{3}{c}{IPCG}&\multicolumn{1}{c}{Inner It}&\multicolumn{1}{c}{Time}\\
\cmidrule(lr){3-5}
\cmidrule(lr){6-8}
ID & Size & IPM & Inner It &Time& IPM & Inner It &Time&red \%&red \% \\
\midrule
6 & 4,096 & 22 & 2,128 & 40.22 & 23 & 193 & 4.11 & 90.9 & 89.8\\
9 & 256 & 11 & 382 & 0.23 & 11 & 147 & 0.13 & 61.5 & 43.5\\
10 & 2,048 & 12 & 2,210 & 0.57 & 16 & 874 & 0.34 & 60.5 & 40.4\\
11$\,\dagger$* & 2,048 & 19 & 663 & 1.41 & 21 & 536 & 1.14 & 19.2 & 19.1\\
401 & 114,688 & 14 & 160 & 15.72 & 12 & 55 & 7.03 & 65.6 & 55.3\\
402$\,\dagger$ & 172,032 & 14 & 238 & 28.14 & 12 & 59 & 9.76 & 75.2 & 65.3\\
403 & 393,216 & 19 & 2,282 & 201.44 & 20 & 205 & 36.94 & 91.0 & 86.6\\
601$\,\dagger$ & 8,192 & 20 & 2,146 & 104.20 & 21 & 652 & 28.41 & 69.6 & 72.7\\
602 & 8,192 & 22 & 2,280 & 124.39 & 20 & 453 & 19.12 & 80.1 & 84.6\\
603 & 8,192 & 16 & 1,085 & 16.53 & 13 & 86 & 2.13 & 92.1 & 87.1\\
701$\,\dagger$ & 131,072 & 12 & 1,028 & 38.64 & 12 & 236 & 13.73 & 77.0 & 64.5\\
702 & 32,768 & 8 & 926 & 15.00 & 8 & 181 & 4.42 & 80.5 & 70.5\\
903$\,\dagger$ & 2,048 & 13 & 1,794 & 2.52 & 16 & 687 & 0.93 & 61.7 & 63.1\\
\midrule
\multicolumn{9}{c}{\footnotesize$\dagger$: $\varepsilon=0.001$ instead of $0.01$,\ \ *: $\texttt{itstart}=20$ instead of 5}\\
\bottomrule
\end{tabular}
\end{table}

All these problems display an impressive reduction in the number of CG iterations and CPU time, even if the original CG tolerance is very rough. The added cost of IPCG varies throughout the problems, but on average is roughly $35-40\%$ of the original iteration cost. Sometimes a reduction in IPM iterations is also observed; this may be because the inexact method proposed is finding by chance a direction that is better than the exact one.

\subsection{PDE constrained optimization}
As a last test example, the authors considered PDE constrained optimization problems (see e.g.\ \cite{pearson_pde}) and used the augmented system approach, in order to test Algorithm \ref{ipminres_alg}. In this section, $\hat v$ and $v$ will denote respectively the continuous and discretized version of a variable $v$. The kind of problems considered involve PDE as constraints and they take the standard form
\begin{align}
\min_{y,u}\quad \frac{1}{2}\|\hat y&-\hat y_0\|^2_{L^2}+\frac{\beta}{2}\|\hat u\|^2_{L^2}\notag\\
\text{s.t.}\ -\nabla^2&\hat y=\hat u+\hat f,\quad \hat y\in\Omega\notag\\
 &\hat y=\hat g,\quad \hat y\in\partial\Omega\notag\\
&\hat u_a\le \hat u\le \hat u_b\notag
\end{align}
where $\Omega$ is the domain of evolution of the problem, $\hat y, \hat u$ are the state and control variables, $\hat y_0$ is the desired state function, $\hat f, \hat g, \hat u_a, \hat u_b$ are given functions and $\beta>0$ is the regularization parameter. The objective of this formulation is to keep the state $\hat y$ close to the fixed desired state $\hat y_0$ and minimize the control $\hat u$, while satisfying the PDE and bound constraints. Problems of this kind arise, for example, in optimal control theory: practical applications include optimal design of semiconductors, shape optimization, optimal gas cooling and many others (the interested reader can find more details in \cite{pde_applications}).

A standard IPM is applied to this problem, using the discretize-than-optimize approach, as described in \cite{pearson_pde}, to obtain the discretized quantities $y, u,  y_0, u_a, u_b$; then, introduce the variables $z_a$ and $z_b$ defined as $(z_a)_j=\mu/(u-u_a)_j$ and $(z_b)_j=\mu/(u_b-u)_j$. After using a standard Q1 finite elements discretization, the augmented system reads
\[\begin{bmatrix} M\ & 0\ & K\\0\ & \beta M+\Theta\ & -J\\ K\ & -J\ & 0\end{bmatrix}\begin{bmatrix}\Delta y\\\Delta u\\\Delta\lambda\end{bmatrix}=\begin{bmatrix}r_y\\r_u\\r_\lambda\end{bmatrix},\]
where $M\in\mathbb R^{n\times n}$ is the finite elements mass matrix, $J\in\mathbb R^{n\times n}$ is the same matrix but with boundary conditions applied, $K\in\mathbb R^{n\times n}$ is the stiffness matrix, $\Theta=Z_a(U-U_b)^{-1}+Z_b(U_a-U)^{-1}$, $\lambda\in\mathbb R^n$ is the vector of Lagrange multipliers. The dimension of the matrices $n$ is determined by the discretization parameter $n_c$, as $n=(2^{n_c}+1)^2$; the whole augmented system has size $3n$. 

This linear system can be solved using MINRES, provided that the preconditioner is positive definite; exploiting the ideas in \cite{pearson_pde} and \cite{pde_prec}, the following preconditioner is employed
\[P=\begin{bmatrix}\tilde M\ & 0\ & 0\\0\ & \beta\tilde M+\Theta\ & 0\\0\ & 0\ & \tilde S\end{bmatrix},\]
where $\tilde M$ contains only the diagonal of $M$ and $\tilde S$ is an approximation of the Schur complement
\[\tilde S=\bigg(K+\frac{1}{\sqrt{\beta}}J\bigg)M^{-1}\bigg(K+\frac{1}{\sqrt{\beta}}J\bigg).\]
The Schur complement preconditioner is constant throughout the IPM iterations and to apply it it suffices to compute the Cholesky factorization of $(K+J/\sqrt\beta)$ once at the beginning of the algorithm. The finite element matrices were computed using the IFISS package \cite{ifiss,ifiss_paper_2}.

\begin{table}[h]
\footnotesize
\caption{Results with {\tt fixtol} and {\tt vartol} approaches}
\label{results_pde_1}
\centering
\begin{tabular}{cr|rrr|rrr}
\toprule
&&\multicolumn{3}{c}{MINRES {\tt fixtol}}&\multicolumn{3}{c}{MINRES {\tt vartol}}\\
\cmidrule(lr){3-5}
\cmidrule(lr){6-8}
$\beta$ & $n_c$ & IPM & Inner It & Time & IPM & Inner It & Time \\
\midrule
\multirow{5}{*}{$10^{-4}$} & 5 & 10 & 747 & 0.32 & 10 & 455 & 0.19\\
& 6  & 11 & 812 & 2.08 & 11 & 515 & 1.34\\
& 7 & 13 & 919 & 29.76 & 13 & 621 & 19.93\\
& 8 & 14 & 930 & 327.33 & 14 & 655 & 236.58\\
& 9 & 14 & 839 & 5,094.79 & 14 & 672 & 3,722.21\\
\midrule
\multirow{5}{*}{$10^{-5}$} & 5 & 11 & 1,424 & 0.48 & 11 & 782 & 0.27\\
& 6 & 13 & 1,711 & 4.27 & 13 & 978 & 2.48\\
& 7 & 14 & 1,861 & 60.08 & 14 & 1,036 & 32.51\\
& 8 & 16 & 2,037 & 706.28 & 16 & 1,231 & 437.32\\
& 9 & 16 & 1,950 & 11,783.63 & 16 & 1,163 & 6,996.73\\
\midrule
\multirow{5}{*}{$10^{-6}$} & 5 & 14 & 3,511 & 1.09 & 13 & 1,798 & 0.57\\
& 6 & 15 & 3,902 & 9.66 & 15 & 2,217 & 5.55\\
& 7 & 16 & 4,216 & 125.13 & 16 & 2,316 & 72.18\\
& 8 & 17 & 4,450 & 1,530.93 & 17 & 2,346 & 814.10\\
& 9 & 20 & 4,959 & 29,979.89 & 19 & 2,679 & 16,260.61\\
\bottomrule
\end{tabular}
\end{table}

An IPM with centrality correctors was applied to this problem. The parameters used are: IPM tolerance $10^{-8}$; for the {\tt fixtol} approach, MINRES tolerance $10^{-8}$; for the {\tt vartol} approach, $\texttt{tol}^\text{max}=10^{-8}$, $\texttt{tol}^0=10^{-2}$; for the IPCG, $\varepsilon=10^{-3}$ and $\texttt{itstart}=15$. Values of $n_c$ from $5$ to $9$ were considered, so that the largest problem had dimension $789,507$; for $\beta$, the values used were $10^{-4}$, $10^{-5}$ and $10^{-6}$.

Table~\ref{results_pde_1} shows the results using the {\tt fixtol} and {\tt vartol} approaches. Table~\ref{results_pde_2} shows the results using the IPMINRES method and the last columns report the reductions in the number of inner iterations and computational time compared with the previous approaches. A negative reduction means that the new approach produces a larger number of iterations or larger computational time: this happens in some small problems either because the inexact direction produces a large number of IPM iterations, or because the reduction in inner iterations is not enough to balance the more expensive CG iteration. This phenomenon however disappears for larger problems.

\begin{table}[h]
\footnotesize
\caption{Results using IPMINRES}
\label{results_pde_2}
\centering
\begin{tabular}{cr|rrr|rr|rr}
\toprule
&&\multicolumn{3}{c}{IPMINRES}&\multicolumn{2}{c}{Reduction {\tt fixtol}:}&\multicolumn{2}{c}{Reduction {\tt vartol}:}\\
\cmidrule(lr){3-5}
$\beta$ & $n_c$ & IPM & Inner It & Time & Inner It \% & Time \% & Inner It \% & Time \%\\
\midrule
\multirow{5}{*}{$10^{-4}$} & 5 & 13 & 535 & 0.32 & 28.4 & 0.0 & -17.6 & -68.4\\
& 6  & 16 & 653 & 1.85 & 19.6 & 11.2 & -26.8 & -38.9\\
& 7 & 15 & 594 & 19.07 & 35.4 & 35.9 & 4.3 & 4.3\\
& 8 & 16 & 640 & 228.36 & 31.2 & 30.2 & 2.3 & 3.5\\
& 9 & 16 & 630 & 3,534.22 & 24.9 & 20.9 & 6.3 & 5.1\\
\midrule
\multirow{5}{*}{$10^{-5}$} & 5 & 12 & 734 & 0.31 & 48.5 & 35.4 & 6.1 & -14.8\\
& 6 & 14 & 821 & 2.30 & 52.0 & 46.1 & 16.1 & 7.3\\
& 7 & 16 & 862 & 27.36 & 53.7 & 54.5 & 16.8 & 15.8\\
& 8 & 16 & 852 & 299.81 & 58.2 & 57.6 & 30.8 & 31.4\\
& 9 & 18 & 834 & 5,072.63 & 50.8 & 49.0 & 28.3 & 27.5\\
\midrule
\multirow{5}{*}{$10^{-6}$} & 5 & 24 & 2,041 & 0.62 & 41.9 & 24.8 & -13.5 & -8.8\\
& 6 & 22 & 1,861 & 5.19 & 52.3 & 46.3 & 16.1 & 6.5\\
& 7 & 18 & 1,533 & 48.04 & 63.6 & 61.6 & 33.8 & 33.4\\
& 8 & 19 & 1,583 & 550.35 & 64.4 & 64.1 & 32.5 & 32.4\\
& 9 & 18 & 1,318 & 8,058.47 & 73.4 & 73.1 & 50.8 & 50.4\\
\bottomrule
\end{tabular}
\end{table}

The reader can observe that when considering larger problems and smaller values of $\beta$, there is a significant reduction in inner iterations and computational time, while the IPM iteration count is almost constant in all three approaches. The improvement that the new method brings is more significant when the linear system becomes more ill conditioned (larger size and smaller $\beta$); this is not surprising, since it is known that the residual of the linear system can be a misleading indicator for ill conditioned problems. The proposed new approach does not suffer from this issue, as these results suggest, because it is related to the IPM properties rather than to the algebraic properties of the matrix, thus making it a more suitable termination criterion for ill conditioned matrices.

These last results show that also the IPMINRES method works as expected and can potentially bring a significant improvement. Moreover, they also show that this new technique of early termination can be applied to different classes of problems, with similar results.

\section{Conclusion}

This paper has shown that it is possible to stop the inner Krylov iterations during an interior point method earlier than it was previously thought, provided that the stopping criterion used is based on the IPM convergence indicators and not only on the reduction of the residual of the linear system.  The authors have given a rationale to explain the expected effect of the termination criterion and have proposed two practical algorithms for the normal equations and augmented system approaches.  They exploit new indicators, related to the convergence of the outer iterations,  and are only marginally more computationally expensive then the original algorithms. A proof of polynomial complexity of such inexact IPM is still elusive and is the subject of further research, as well as a characterization of the constant $M$ involved in the criterion, potentially depending on the linear solver chosen. This could provide a theoretical result on the minimum threshold of accuracy needed for the convergence of IPMs.

The paper provided computational evidence for a wide range of problems, from image processing, compressed sensing and PDE-constrained applications; they all display a significant reduction in the number of inner Krylov iterations and computational time. In particular, the largest gain appears in the early IPM phase, where it is already known that a lower accuracy of Newton directions is sufficient; however, the authors have also shown that it is extremely difficult to mimic the behaviour of the proposed stopping criterion using only a residual test, since the residual of the optimal stopping point may vary drastically during the IPM iterations.  Indeed, the new technique outperforms also the termination criterion that uses a variable residual tolerance. Moreover, the new IPCG seems to keep the IPM iteration count closer to the original one than with a variable tolerance. This fact strongly supports the initial claim that a good stopping criterion for CG or MINRES should be based on the IPM convergence indicators.

The analysis of the numerical results suggests that for ill conditioned problems the performance gain of the new stopping criterion is larger. However, there are some problems that are so badly conditioned and/or require so much precision in the IPM direction that the new stopping criterion is not able to perform well; more research is needed to find a suitable more advanced termination strategy for these challenging problems.

The authors strongly believe that many other practical optimization algorithms 
in which a Krylov subspace method is used to solve the linear equation 
systems are likely to benefit from a specialized stopping criterion 
developed with an understanding of the specific needs of the method.

\subsection*{Acknowledgements}
The authors are grateful to John W. Pearson, for providing codes and fruitful discussions about PDE problems, and to the two anonymous referees, whose comments helped to make the paper stronger and more precise.

\vspace{20pt}
\bibliographystyle{siamplain}
\bibliography{biblio}

\end{document}